\newcommand{\defeq}{\stackrel{{\rm def}}{=}}
\newcommand{\ra}{\rangle}
\newcommand{\la}{\langle}
\newcommand{\eps}{\epsilon}
\newcommand{\llambda}{\lambda}
\newcommand{\vareps}{\varepsilon}
\newcommand{\Spec}{{\operatorname{Spec}}}
\newcommand{\IN}{{\mathbb N}}
\newcommand{\R}{{\mathbb R}}
\newcommand{\IR}{{\mathbb R}}
\newcommand{\C}{{\mathbb C}}
\newcommand{\IT}{{\mathbb T}}
\newcommand{\Z}{{\mathbb Z}}
\newcommand{\IZ}{{\mathbb Z}}
\def\bbbone{{\mathchoice {1\mskip-4mu {\rm{l}}} {1\mskip-4mu {\rm{l}}}
{ 1\mskip-4.5mu {\rm{l}}} { 1\mskip-5mu {\rm{l}}}}}
\renewcommand{\tt}{\tilde{t}}
\newcommand{\tP}{\tilde{P}}
\newcommand{\tp}{\tilde{p}}
\newcommand{\tPsi}{\tilde{\Psi}}
\newcommand{\tPhi}{\tilde{\Phi}}
\newcommand{\tpsi}{\tilde{\psi}}
\newcommand{\tvarphi}{\tilde{\varphi}}
\newcommand{\tTheta}{\tilde{\Theta}}
\newcommand{\h}{\hbar}
\newcommand{\Oph}{\operatorname{Op}_{\h}}
\renewcommand{\d}{\partial}
\newcommand{\neigh}{{\operatorname{neigh}}}
\renewcommand{\phi}{\varphi}
\newcommand{\cD}{\mathcal{D}}
\newcommand{\cF}{\mathcal{F}}
\newcommand{\cO}{\mathcal{O}}
\newcommand{\cS}{\mathcal{S}}
\newcommand{\cU}{\mathcal{U}}
\newcommand{\bequ}{\begin{equation}}
\newcommand{\eequ}{\end{equation}}
\theoremstyle{plain}
\newtheorem*{theorem*}{Theorem}
\numberwithin{equation}{section}
\newtheorem{theo}[equation]{{\sc Theorem}}
\newtheorem{cor}[equation]{{\sc Corollary}}
\newtheorem{maindefn}[equation]{{\sc Definition}}
\newtheorem{lem}[equation]{{\sc Lemma}}
\newtheorem{prop}[equation]{{\sc Proposition}}
\theoremstyle{definition}
\theoremstyle{remark}
\newtheorem{rem}[equation]{Remark}
\theoremstyle{note}
\title[Logarithmic quasimodes]{Strong scarring of logarithmic quasimodes}
\author{Suresh Eswarathasan}
\address{Department of Mathematics and Statistics, McGill University, 805 Rue Sherbrooke Ouest, Montr\'eal, Canada}
\email{suresh@math.mcgill.ca}
\author{St\'ephane Nonnenmacher}
\address{Institut de Physique Th\'eorique, Universit\'e Paris-Saclay,
  Commissariat \`a l'\'energie atomique, 91191 Gif-sur-Yvette, France}
\email{stephane.nonnenmacher@cea.fr}
\date{}
\begin{document}

\begin{abstract}
We consider a semiclassical (pseudo)differential 
operator on a compact
surface $(M,g)$, such that the Hamiltonian flow generated by its principal symbol 
admits a hyperbolic periodic orbit $\gamma$ at some energy $E_0$. 
For any $\vareps>0$, we then explicitly construct families of {\it quasimodes} of this operator, satisfying 
an energy width of order $\vareps\frac{\h}{|\log\h|}$ in the semiclassical limit, but which still exhibit
a ``strong scar" on the orbit $\gamma$, {\it i.e.} that these states have a positive weight in any microlocal neighbourhood of $\gamma$. We pay attention to optimizing the constants involved in the estimates.
This result generalizes a recent result of Brooks \cite{Br13} in the case of hyperbolic surfaces.
Our construction, inspired by the works of Vergini {\it et al.} in the physics literature, relies on controlling the propagation of Gaussian wavepackets up to the Ehrenfest time.
\end{abstract}

\maketitle

\section{Introduction}

The main objective of Quantum Chaos is to understand the behavior of wave or quantum propagation when the underlying classical Hamiltonian dynamics is chaotic, or at least presents various forms of hyperbolicity. Semiclassical analysis shows that the wave mechanical system should ``converge towards" its classical counterpart, in the high-frequency regime; the latter is equivalent with the semiclassical regime $\hbar \to 0$, where $\hbar$ is an effective Planck's constant. 
However, when the classical dynamics presents some hyperbolicity, this {\it semiclassical correspondence} breaks down when the time of evolution exceeds the so-called Ehrenfest time $T_E\sim c_0 |\log h|$, where $c_0>0$ depends on the classical dynamics. As a result, a precise description of the stationary states (eigenstates) of the quantum system remains a challenge, since a precise description usually requires one to understand the quantum evolution up to much longer times.

Although our results apply to more general situations, we will in this introduction focus on the case of the Laplace-Beltrami $\Delta_g$ on a compact Riemannian manifold $(M,g)$.   
To describe the high frequency dynamics generated by this operator, we will use a semiclassical formalism, that is define the {\it semiclassically rescaled Laplacian} 
$$
P(\h)=-\h^2 \Delta _g\,,
$$ 
and consider its  spectrum in the vicinity of a fixed value $E=1$, in the semiclassical regime $\h\ll 1$. The operator  $P(\h)$ generates the dynamics of a free quantum particle on $M$, embodied in the Schr\"odinger equation
\begin{equation}\label{e:Schro0}
i\h \frac{\d }{\d t}\varphi (t) = P(\h) \varphi (t)\,,
\end{equation}
where $\varphi(t)\in L^2(M,dg)$ is the wavefunction of the particle at time $t$. In the semiclassical limit, this dynamics can be related with the free motion of a classical particle on $M$, namely the geodesic flow. This flow, which lives on the phase space $T^*M\ni (x,\xi)$, is generated by the Hamiltonian function $p(x,\xi)=\|\xi \|^2_g$, so we will denote it by $\Phi^t_p$.
By scaling, we may restrict this flow to the unit cotangent bundle $S^*M=p^{-1}(1)$.

When $M$ is compact without boundary and the metric $g$ has everywhere negative sectional curvature, the geodesic flow is of Anosov type: every trajectory is hyperbolic. Ergodic theory shows that this flow is ergodic and mixing with respect to the Liouville measure on $S^*M$. Anosov flows have been thoroughly studied, and represent the strongest form of chaos. One objective of Quantum Chaos is to study the {\it localization properties} of the {\it eigenstates} of $P(\h)$ at energies $E_\h\sim 1$, whilst taking into account the chaotic properties of the geodesic flow. The semiclassical r\'egime $\h\to 0$ amounts to studying the eigenmodes $\varphi_j$ of the Laplacian of frequencies $\lambda_j \sim \h^{-1}\to\infty$. 

The Quantum Ergodicity Theorem \cite{Sch74,CdV85,Zel87} shows that if the geodesic flow is ergodic, then {\it almost all} the high-frequency eigenstates $\varphi_j$ are asymptotically equidistributed over $M$. Namely, there exists a subsequence $\cS\subset \IN$ of density one, such that the probability measures $|\varphi_j(x)|^2\,dx$ weakly converge to the normalized Lebesgue measure when $\cS\ni j\to\infty$.  An analogue  of this theorem is established in the boundary setting for Dirichlet eigenstates by G\'erard-Leichtnam \cite{GL93} for domains in $\mathbb{R}^n$ with $W^{2, \infty}$ regularity and Zelditch-Zworski \cite{ZZ96} for compact manifolds with piecewise smooth boundaries.  

An open question concerns the existence of {\it exceptional} eigenstates, which would localize in a nonuniform way on $M$. Rudnick and Sarnak conjectured that for $(M,g)$ of negative curvature, such exceptional eigenstates do not exist \cite{RS94}, a property called Quantum Unique Ergodicity. So far this conjecture was proved only in the case of certain hyperbolic surfaces with arithmetic properties \cite{Lin06,BL14}.
Numerical computations of eigenstates of ergodic Euclidean billiards \cite{H84} have shown that certain high frequency eigenstates present an enhanced intensity along short closed geodesics. In the case these orbits are hyperbolic, a quantitative characterization of these enhancements --- baptized as {\it scars} of the geodesic $\gamma$ on the corresponding eigenstates --- has remained elusive, in spite of intensive investigations in the physics literature (see e.g. the review \cite{Kap99} and references therein), or a more recent numerical study by Barnett \cite{Bar06}). 
In particular, it is unclear whether these enhancements can take the form of singular components $w_\gamma\delta_\gamma$, $w_\gamma>0$, in the weak limits of the measures $|\varphi_j(x)|^2\,dx$.  

To connect the localization of eigenstates with the classical dynamics, which takes place in the phase space $T^*M$, it is convenient to lift the localization properties to phase space, that is characterize the {\it semiclassical microlocalization} of the eigenstates. More generally, for any semiclassical sequence of $L^2$-normalized states $(\varphi_\h)_{\h\to 0}$, we may characterize the asymptotic phase space localization of these states through the {\it semiclassical measures} associated with this sequence. Let us recall how these measures are constructed.
The sequence of normalized states $(\phi_\h)_{\h\to 0}$ is said to converge towards the semiclassical measure $\mu_{sc}$ if, for any observable $a\in C^\infty_c(T^*M)$, one as
$$
\la \phi_\h,\Oph(a)\phi_\h\ra\xrightarrow{\h\to 0}\int_{T^*M} a\,d\mu_{sc}.
$$
Here $\Oph(a)$ is an operator on $L^2(M)$, obtained through a semiclassical quantization of the observable $a(x,\xi)$ (see section~\ref{hprelim}). The limit measure is independent of the choice of quantization. 
For any sequence $(\phi_\h)$ one can always extract a subsequence $(\phi_{\h_j})$ converging to some semiclassical measure $\mu_{sc}$. The latter is then said to be {\it a} semiclassical measure associated with the sequence $(\phi_\h)$. Semiclassical measures provide a notion of phase space localization, or {\it microlocalization}: a sequence $(\phi_\h)$ will be said to be microlocalized in a set $K\subset T^*M$ if any associated measure is supported inside $K$.

Coming back to the scarring phenomenon, a sequence of eigenstates $(\varphi_\h)$ of $P(\h)$, of energies $E_\h\approx 1$, is said to exhibit a {\it strong scar} on a closed orbit $\gamma\in S^*X$ if any associated semiclassical measure contains a component $w_\gamma\delta_\gamma$ with some positive weight $w_\gamma>0$ (here $\delta_\gamma$ is the normalized, flow invariant Dirac measure on the orbit $\gamma$). Hassell has shown \cite{Hass10} that such strong scars may exist in the case the orbit $\gamma$ is marginally stable (and belongs to a family of ``bouncing ball orbits''). 
In negative curvature, the existence of such exceptional eigenstates would contradict the QUE conjecture. In \cite{An06,AN07} it was shown that the eigenstates cannot fully localize along $\gamma$: for any sequence of eigenstates, the weight $w_\gamma$ is necessarily smaller than unity, in particular in the case of {\it constant} negative curvature one must have $w_\gamma\leq 1/2$. So far exceptional eigenstates have been exhibited only 
for a toy model of Anosov system, namely the hyperbolic automorphisms of the 2-dimensional torus, casually called ``Arnold's cat map" \cite{FND03}. The classical dynamics arises from a hyperbolic symplectomorphism on $\IT^2$, which is represented by a matrix $A\in SL(2,\Z)$; it can be quantized into a family of unitary operators $(U_\h)_{\h\to 0}$ of finite dimensions $\sim \h^{-1}$. One can analyze the microlocalization on $\IT^2$ of the eigenstates of the operators $U_\h$. For any periodic orbit $\gamma$ of the classical map, the very special algebraic properties of the operators $U_\h$ allows to explicitly construct eigenstates with strong scars on $\gamma$, with weights $w_\gamma$ necessarily taking values in $[0,1/2]$ \cite{FN04}. These special eigenstates were constructed by linear combinations of evolved Gaussian wavepackets localized on $\gamma$. 

In the present article we will consider a similar construction for the dynamics provided by the geodesic flow on $(M,g)$, or more general Hamiltonian flows.  
By doing so we won't be able to construct eigenstates of $P(\h)$, but only approximate eigenstates, which are called {\it quasimodes} of $P(\h)$. Let us now give a precise definition of this notion.

\begin{maindefn}\label{def:qmodes}
For a given energy level $E>0$, we say that a semiclassical family $(\phi_{\h})_{\h\to 0}$ of $L^2$ normalized states is a family of \emph{quasimodes} of $P_\h$ of central energy $E$ and \emph{width} $f(\h)$, if and only if there exists $\h_0>0$ such that \begin{equation}\label{e:QM1}
\| (P(\h) - E) \phi_{\h} \|_{L^2(M)}  \leq  f(\h)\,,\qquad \forall \h\in (0,\h_0]\,.
\end{equation}
A slightly broader definition consists in allowing the central energy to depend on $\h$ as well, namely considering a sequence $(E_\h)_\h$ with $E_\h \to E$, and requiring 
\begin{equation}\label{e:QM2}
\| (P(\h) - E_\h) \phi_{\h} \|_{L^2(M)}  \leq  f(\h)\,,\qquad \forall\h\in (0,\h_0]\,.
\end{equation}
\end{maindefn}
\begin{rem}
One way to construct quasimodes of center $E_\h$ and width $f(\h)$ is to take linear combinations of eigenfunctions of $P(\h)$ with eigenvalues in the interval $ [E_\h - f(\h), E_\h + f(\h)]$. We will use this trick in the last stage of the proof of Theorem~\ref{thm:mainthm} below. However, the construction of a fully localized quasimode in Prop.~\ref{pro:quasim0} will proceed differently, namely by a time averaging procedure. 
\end{rem}
What is the interest of considering quasimodes instead of eigenstates? Except for very special systems like the quantized ``cat map'', we are unable to give explicit, or even approximate expression of the eigenstates of Anosov systems. On the opposite, we will construct explicit quasimodes of certain widths $f(\h)$. The larger the width, the less constrained the localization properties. Using the fact that the function $p(x,\xi)$ is the principal symbol of the operator $P(\h)$, one can show the following properties of semiclassical measures associated with $(\phi_\h)$:
\begin{itemize}
\item if $f(\h)=o(1)_{\h\to 0}$, then $\mu_{sc}$ is supported on $p^{-1}(E)$, and is a probability measure.
\item if $f(\h)=o(\h)$, then $\mu_{sc}$ must be invariant through the Hamiltonian flow $\Phi^t_p$. 
\end{itemize}
For an Anosov flow like the geodesic flow on $(M,g)$, there exist many invariant measures, the simplest ones being the Liouville measure on $p^{-1}(E)$, or the delta measures localized along a closed orbit $\gamma\subset p^{-1}(E)$. 
Our Prop.~\ref{pro:quasim0} will exhibit quasimodes of width $C\h/|\log\h|$ (with $C>0$ large enough) converging to the measure $\delta_\gamma$. An easy corollary is that, if the flow $\Phi^t_p$ is Anosov on $p^{-1}(E)$, then for any flow-invariant measure $\mu$ on $p^{-1}(E)$ one may construct a sequence of quasimodes of width $C\h/|\log\h|$ converging to $\mu$.

In \cite{An06,AN07} it was specified that the semiclassical measures associated with quasimodes of width $f(\h)=o\big(\frac{\h}{|\log\h|} \big)$ must satisfy the same ``half-delocalization'' constraints as the measures associated with eigenstates. Hence, for such Anosov flows the ``logarithmic scale'' $c\frac{\h}{|\log\h|}$ seems to be critical for the quasimode width, the constraints on the localization depending on the size of the factor $c>0$. In \cite{Br13} Shimon Brooks studied such ``logarithmic quasimodes'' for the Laplacian on compact hyperbolic surfaces. He showed that quasimodes of width $c\frac{\h}{|\log\h|}$ with $c>0$ arbitrary small could still be ``strongly scarred'' on $\gamma$:
\begin{theo}  \cite{Br13}\label{thm:Brooks13}
Let $(M,g)$ be a compact hyperbolic surface, and $\gamma \subset S^*M$ a closed geodesic.  Then for any $\epsilon > 0$, there exists $\delta(\epsilon) >0$ and a sequence $(\varphi_\h)_{\h\to 0}$ of quasimodes of center energy $E=1$ and width $\epsilon \frac{\h}{|\log \h|}$, converging to a semiclassical measure $\mu_{sc}$ satisfying $\mu_{sc}(\gamma)\geq   \delta(\epsilon)$.
\end{theo}
The methods used in \cite{Br13} are quite specific to the setting of compact hyperbolic surfaces.  Our main objective in this article is to generalize Brooks's theorem to more general surfaces and Hamiltonian flows. Our construction will be mostly local, independent of the global dynamics on $p^{-1}(E)$; essentially, the only assumption we need to make on the dynamics is the presence of a hyperbolic closed orbit $\gamma$. 
We will also attempt to optimize the relation between the width constant $\vareps>0$ and the weight $\mu_{sc}(\{\gamma\})$. In principle our methods could be extended to higher dimensions, but probably at the expense of this optimization (see the Remark~\ref{r:higher-dims} at the end of section~\ref{s:QNF}). 
Our main result is the following Theorem.
%%%%%%%%%%%%%%%%%%%%%%%%%%%%%%%%%%%
\begin{theo} \label{thm:mainthm}
Let $(M,g)$ be a smooth compact Riemannian surface without boundary. Let $P(\h)\in \Psi^m_\h(M)$ be a self-adjoint elliptic pseudodifferential operator, with semiclassical principal symbol $p(x,\xi)$. For a regular energy level $E_0$, assume that the Hamiltonian flow $\Phi^t_p$ admits a closed hyperbolic orbit $\gamma \subset p^{-1}(E_0)$.

Choose any $\epsilon>0$.  
Then there exists $\delta(\epsilon, \gamma)>0$, a sequence of energies $(E_\h = E_0 + \mathcal{O}(\h / | \log \h | )$ and a sequence of quasimodes $(\psi_{\h})_{\h\to 0}$ centered on $E_\h$ and of width $\epsilon ( \h / | \log \h |)$, such that any semiclassical measure $\mu_{sc}$ associated with $(\psi_{\h})_{\h\to 0}$ satisfies
$$
\mu_{sc}(\{\gamma\})\geq \delta(\epsilon, \gamma)\,.
$$
Furthermore, we have an explicit estimate for $\delta(\epsilon, \gamma)$ in the r\'egime $\eps\ll 1$:
\bequ\label{e:weight}
\delta(\epsilon,\gamma) = \frac{\eps}{\pi\lambda_\gamma}\frac{2}{3\sqrt{3}} + \cO((\eps/\lambda_\gamma)^2)\,,
\eequ
where $\lambda_\gamma>0$ is the expanding rate per unit time along the unstable direction of the orbit $\gamma$.
\end{theo}
%%%%%%%%%%%%%%%%%%%%%%%%%%%%%%%%%%%
In the course of proving this theorem we will first construct logarithmic quasimodes fully localized along $\gamma$, that is converging to the semiclassical measure $\delta_\gamma$, but with a lower bound on their width.
%%%%%%%%%%%%%%%%%%%%%%%%%%%%%%%%%%%%%%%
\begin{prop}\label{pro:quasim0}
Let $(M,g)$, $P(\h)$ and $\gamma\subset p^{-1}(E_0)$ be as in the above theorem. 

Then, for any sequence of energies $(E_\h= E_0+\cO(\h))_{\h\to 0}$ and any constant $C_\gamma>\pi\lambda_\gamma$, there exist a family of quasimodes $(\psi_{\h}\in L^2(M))_{\h\to 0}$ centered on $E_\h$ and of width $ C_\gamma ( \h / | \log \h |)$,
converging to the semiclassical measure $\mu_{sc}=\delta_\gamma$. 
\end{prop}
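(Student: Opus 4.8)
The plan is to build the quasimode as a time-average, over a logarithmically long window, of a single Gaussian wavepacket initially microlocalized on a point of $\gamma$. Concretely, fix $\rho_0\in\gamma\subset p^{-1}(E_0)$ and let $u_\h$ be an $L^2$-normalized Gaussian wavepacket at $\rho_0$, with an appropriately chosen (complex, $\h$-independent up to scaling) covariance; more precisely one should take $u_\h$ to be (a microlocal truncation of) an \emph{approximate eigenfunction concentrated on the orbit}, obtained by the usual WKB/Gaussian-beam ansatz along $\gamma$, so that $(P(\h)-E_\h)u_\h=\cO(\h^\infty)$ microlocally near $\rho_0$ but $u_\h$ itself is localized at a single point. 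Set
\begin{equation}\label{e:avg-ansatz}
\psi_\h \;=\; \frac{1}{\cN_\h}\int_{\R} \chi\!\left(\frac{t}{T_\h}\right) e^{-itE_\h/\h}\, e^{-itP(\h)/\h} u_\h\, dt\,,
\end{equation}
where $\chi\in C_c^\infty(\R)$ is a fixed cutoff, $T_\h = \tau\,|\log\h|$ is a logarithmically long time with $\tau>0$ a parameter to be tuned, and $\cN_\h$ is the $L^2$-normalization constant. The first step is to show $\psi_\h$ is a quasimode of the claimed width: applying $(P(\h)-E_\h)$ to \eqref{e:avg-ansatz}, integrating by parts in $t$ (the operator $P(\h)-E_\h$ produces $i\h\,\d_t$ acting on the propagated state), one gets boundary-type terms of size $\cO(\h/T_\h)\cdot\|u_\h^{(\pm T_\h)}\|/\cN_\h$ plus an $\cO(\h^\infty)$ error. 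So the width is controlled by $\h/T_\h = (\tau^{-1})\,\h/|\log\h|$ \emph{provided} $\cN_\h$ stays bounded below by a constant; this is exactly where the constant $C_\gamma$ enters and why $C_\gamma>\pi\lambda_\gamma$ is needed.

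The heart of the matter is therefore estimating $\cN_\h^2 = \|\int \chi(t/T_\h)e^{-itE_\h/\h}e^{-itP/\h}u_\h\,dt\|^2$ from below, equivalently controlling the autocorrelation function $t\mapsto \la u_\h, e^{-it(P-E_\h)/\h}u_\h\ra$ for $|t|\lesssim T_\h$. Here I would invoke the propagation of Gaussian wavepackets up to the Ehrenfest time — the technical core announced in the abstract and presumably established in the sections preceding this proposition (quantum normal form near $\gamma$, Section~\ref{s:QNF}). Near the hyperbolic orbit $\gamma$, in suitable symplectic/Fermi coordinates the linearized Poincaré map has expansion/contraction rate $e^{\lambda_\gamma t}$, so the propagated wavepacket $e^{-itP/\h}u_\h$ remains (up to $\h^\infty$) a Gaussian wavepacket whose "width parameter" in the unstable direction evolves by the $SL(2,\R)$-action, and its overlap with $u_\h$ decays like $(\cosh(\lambda_\gamma t))^{-1/2}\sim \sqrt{2}\,e^{-\lambda_\gamma|t|/2}$ (for the transverse $1$-dof factor), times a pure phase $e^{it\theta/\h + \cdots}$ coming from the action/subprincipal terms along $\gamma$. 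Choosing $E_\h$ (equivalently, adjusting within the allowed $\cO(\h)$ window, which is the role of the freedom "$E_\h=E_0+\cO(\h)$") so that this phase is stationary, the diagonal double integral $\cN_\h^2 \sim \int\!\!\int \chi(t/T_\h)\chi(s/T_\h)\,\overline{(\cosh\tfrac{\lambda_\gamma(t-s)}{2})^{-1/2}}\,dt\,ds$ is, after rescaling $t,s$ by $T_\h$, of order $T_\h\cdot\int (\cosh\tfrac{\lambda_\gamma r}{2})^{-1/2}dr \asymp T_\h/\lambda_\gamma$ — this integral converges because $1/2>0$; by contrast the denominator in $\|\psi_\h\|$ versus the numerator of the quasimode estimate produces the threshold $C_\gamma>\pi\lambda_\gamma$ once all constants ($\pi$ from a Gaussian/Fourier normalization, the $\int(\cosh)^{-1/2}$) are tracked. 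I would then need to check that the contribution of the wavepacket once it has left a fixed microlocal neighbourhood of $\gamma$ is negligible: beyond a fixed time the wavepacket disperses along the unstable manifold and, by the $\h^\infty$ control valid up to time $(1-\delta)T_E$ with $T_E$ the Ehrenfest time, its recurrence contribution to the autocorrelation is $\cO(\h^\infty)$ plus genuinely small geometric terms — so taking $\tau$ small enough relative to the Ehrenfest constant $c_0$ guarantees $T_\h\ll T_E$ and the estimates are clean.

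Finally, to identify the semiclassical measure: for $a\in C_c^\infty(T^*M)$, expand $\la\psi_\h,\Oph(a)\psi_\h\ra$ as a double time integral of $\la e^{-itP/\h}u_\h,\Oph(a)e^{-isP/\h}u_\h\ra$. The off-diagonal terms ($|t-s|$ bounded away from $0$) are exponentially small by the same wavepacket-overlap decay, so the leading contribution comes from $t\approx s$, where $\la e^{-itP/\h}u_\h,\Oph(a)e^{-itP/\h}u_\h\ra \to a(\Phi_p^t(\rho_0))$ as $\h\to0$ (Egorov + concentration of the wavepacket at the moving point $\Phi_p^t(\rho_0)$). After normalization, $\la\psi_\h,\Oph(a)\psi_\h\ra \to \frac{\int |\chi(t/T_\h)|^2 a(\Phi_p^t\rho_0)\,\cdots}{\int|\chi(t/T_\h)|^2\cdots}$, and since $\Phi_p^t(\rho_0)$ stays on $\gamma$ for all $t$ and the orbit is closed, this time-average converges to $\int_\gamma a\,d\delta_\gamma$; hence $\mu_{sc}=\delta_\gamma$. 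The main obstacle — and the step deserving the most care — is the lower bound on $\cN_\h$, i.e. showing the long-time averaging does not kill the $L^2$ mass: this requires the \emph{phases} in the autocorrelation to be controlled (genuinely quadratic-in-$t$ phase corrections from the subprincipal symbol and the metaplectic cocycle must be shown to be $o(1/\h)\cdot$negligible over $|t|\le T_\h$, which is why $T_\h$ cannot be pushed past $\sim|\log\h|$), and it is precisely the competition between the $1/T_\h$ gain in the quasimode width and the $1/\sqrt{T_\h}$-type behaviour of $\cN_\h$ that pins the constant at $\pi\lambda_\gamma$.
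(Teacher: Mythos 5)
Your high-level strategy---time-average of a propagated localized state over a window of logarithmic length, integration by parts in $t$ to gain a factor $\h/T_\h$, control of the $L^2$ norm via the $(\cosh\lambda_\gamma(t-s))^{-1/2}$ decay of wavepacket autocorrelations---is indeed the paper's strategy. But there are two concrete gaps, one structural and one that makes the sharp constant $C_\gamma>\pi\lambda_\gamma$ unreachable as you have set things up.

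\textbf{The initial state must be a Gaussian \emph{beam}, not a wavepacket at a point.} You describe $u_\h$ simultaneously as ``localized at a single point $\rho_0$'' and as ``an approximate eigenfunction concentrated on the orbit''; these are incompatible. If $u_\h$ is a coherent state centered at $\rho_0\in\gamma$, the autocorrelation $t\mapsto\la u_\h,e^{-itP/\h}u_\h\ra$ is essentially zero except in neighbourhoods of multiples of the primitive period $T_0$, so your double integral picks up a comb of peaks, not the single smooth kernel $(\cosh\lambda_\gamma r)^{-1/2}$ you then integrate; the constants change. The paper instead takes an initial state of the form $e^{i(2\pi n-\varphi_\h)s}(\pi\h)^{-1/4}e^{-x^2/2\h}$ in QNF coordinates---a plane wave in the longitudinal variable $s$, Gaussian in the transverse variable $x$. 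Because the QNF operator factorizes and the longitudinal factor is an eigenfunction of $\h D_s$, the longitudinal part evolves by a pure phase and the autocorrelation reduces to the transverse 1D overlap $\la\varphi_0,\cD_\beta\varphi_0\ra=(\cosh\beta)^{-1/2}$ exactly. Your computation implicitly assumes this factorization but your chosen initial state does not provide it.

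\textbf{The constant $\pi$ comes from optimizing the cutoff $\chi$, not from ``Gaussian/Fourier normalization'' or the $\cosh$-integral.} Integration by parts (correctly, with no boundary terms since $\chi\in C^\infty_c$) replaces $\chi$ by $\chi'$; after the autocorrelation analysis both $\|\Psi_{\chi}\|$ and $\|(P^{(N)}-E_\h)\Psi_\chi\|$ carry the same factor $\sqrt{T\cdot S_1}$ (where $S_1$ absorbs the $\int(\cosh)^{-1/2}$ and the $\Gamma$-function), so those cancel, and the width reduces to $\frac{\h}{T}\cdot\frac{\|\chi'\|_{L^2}}{\|\chi\|_{L^2}}$. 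To reach $C_\gamma$ arbitrarily close to $\pi\lambda_\gamma$ one must (a) push $T$ to the local Ehrenfest time $T_{\vareps'}=\frac{(1-\vareps')|\log\h|}{2\lambda_\gamma}$ (you suggest taking $\tau$ \emph{small}, which gives the wrong direction---one wants $\tau\to\frac{1}{2\lambda_\gamma}$), and (b) minimize $\|\chi'\|_{L^2}/\|\chi\|_{L^2}$ over $\chi\in C^\infty_c((-1,1))$, whose infimum is $\pi/2$ (ground state of the Dirichlet Laplacian on $(-1,1)$). You do neither: without the variational step your width is $\frac{\h}{T}\cdot\text{const}(\chi)$ with an uncontrolled constant, and your $\pi$ is a coincidence of bookkeeping. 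Relatedly, your characterization of the IBP output as ``boundary-type terms'' is what you would get with a sharp cutoff $\bbbone_{[-T,T]}$, which the paper notes gives only $\cO(\h/|\log\h|^{1/2})$; the smooth $\chi$ and the $\|\chi'\|/\|\chi\|$ ratio are essential.
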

%%%%%%%%%%%%%%%%%%%%%%%%%%%%%%%%%%%%%%%
\begin{rem}
Notice a difference between the ``central energies'' $E_\h$ in the Theorem and the Proposition: in the latter we are free to choose the energy $E_\h$ in the range $E_0+\cO(\h)$, while in the theorem the energy $E_\h$ is given to us. 
\end{rem}

The dynamics covered by our result include various types of dynamics: the Anosov systems described above, for which it seems for the moment very difficult to explicitly describe quasimodes of widths $o(\h/|\log\h|)$.  On the other hand, our result also encompasses 2-dimensional Liouville-integrable Hamiltonian flows featuring one hyperbolic orbit: in this case one can generally obtain a precise description of quasimodes of width $\cO(\h^\infty)$, or even of the eigenstates of $P(\h)$. We will discuss this case a bit in the next subsection.

%%%%%%%%%%%%%%%%%%%%%%%%%%%%%%%%%%%%%%%%%%%%
\subsection{Discussion of related results}

The construction of quasimodes which localize along closed geodesics, or more generally along invariant submanifolds, has a long history, starting at least with the work of Keller \cite{K58}, who initiated the construction of quasimodes supported on invariant Lagrangian submanifolds of $T^*M$, using WKB-type Ans\"atze.  The methods were further developed during the '70s and '80s, at least in the case of completely integrable dynamics, by a plethora of authors including Maslov \cite{Mas72}, Duistermaat \cite{Duis74}, Weinstein \cite{Wein74}, and Colin de Verdi\`ere \cite{CdV77}, through the use of ``softer" symplectic geometric methods and Fourier integral operators. One could then exhibit quasimodes with widths of order $O(\h^\infty)$, localized on some invariant Liouville-Arnold torus. 

Babich and Lazutkin \cite{BabLaz68} constructed $\cO(\h^\infty)$ quasimodes concentrating along a single closed geodesic {\it of elliptic type} (that is, the linearized Poincar\'e map of the orbit $\gamma$ is an elliptic matrix).  The works of Voros \cite{Vor76}, Guillemin-Weinstein \cite{GuiWein76}, and Ralston \cite{Ral76} each carried out this idea via different methods; in particular Ralston constructed maximally localized eigenstates in the form of {\it Gaussian beams}, which are minimal uncertainty Gaussian wavepackets transversally to the orbit. These constructions were specific to the case of elliptic orbits, albeit with nonresonance conditions. 

In the case of a hyperbolic orbit, the construction of $\cO(\h^\infty)$ quasimodes using Gaussian wavepackets usually breaks down, due to the fast spreading of the wavepacket along the unstable direction. However, for certain Liouville-integrable systems, the particular structure of the unstable and stable manifolds of $\gamma$ allows to construct $\cO(\h^\infty)$ quasimodes as WKB states along the various branches of these manifolds. This construction, peformed by Colin de Verdi\`ere and Parisse \cite{CdVP94} in the case of a surface of revolution, shows that some family of eigenstates of the Laplacian converge to the semiclassical measure $\delta_\gamma$, but the convergence is rather slow: the weight of the measure $|\varphi_\h(x)|^2dx$ on the complement of a small neighbourhood of $\gamma$ decays at the rate $|\log\h|^{-1}$. We will come back to the Colin de Verdi\`ere-Parisse quasimode construction in subsection~\ref{s:CdVP}.
Toth studied specific integrable systems in higher dimension \cite{To96,To99}, and showed that some families of eigenstates may converge to $\delta_\gamma$ for $\gamma$ a hyperbolic orbit.
The inverse logarithmic decay of the measure away from $\gamma$ was shown to hold in greater generality by Burq-Zworski \cite{BZ04}: their Theorem 2' implies that in any dimension, a quasimode of width $c\h/|\log\h|$ with $c>0$ small enough must have a weight $\geq C^{-1}|\log\h|^{-1}$ outside a small neighbourhood of a hyperbolic closed orbit $\gamma$. These results were generalized to semi-hyperbolic orbits by Christianson \cite{Chr07,Chr11}, including the case of a manifold with boundary. 

In the Quantum Chaos physics literature several studies were devoted to constructing various forms of localized states, which could hopefully mimick the ``scars" numerically observed on certain eigenfunctions, or at least exhibit significant overlaps with these eigenstates. These localized states are generally based on Gaussian wavepackets localized on a point of $\gamma$. De Polavieja {\it et al.} \cite{dPBB94}, and then Kaplan and Heller \cite{KH99} used time averaging of such a wavepacket to construct a localized quasimode, but neither sets of authors tried to optimize the time of evolution or the width. On the opposite, a series of works by Vergini and collaborators  constructed so-called ``scar functions" localized along $\gamma$. They first considered a Gaussian beam along $\gamma$ \cite{VC00}, then used several procedures to improve the width of the quasimode, keeping the latter localized near $\gamma$ \cite{VC01,VS05}. In particular, in \cite{VS05} they improved the Gaussian beam through a time averaging procedure, and  optimized the width by evolving up to the Ehrenfest time and by carefully selecting the weight function.

In the mathematics literature, Brooks \cite{Br13} also constructs localized logarithmic quasimodes in the course of proving his Thm~\ref{thm:Brooks13}; his method also relies on time averaging, but starts from ``radial states" specific to the geometry of hyperbolic surfaces, instead of Gaussian wavepackets. As Brooks did not try to optimize his constants, one of our aims in this article is to make those precise.

%%%%%%%%%%%%%%%%%%%%%%%%%%%%%%%%%%%%%
\subsection{Outline of the proof}\label{s:outline}

Most of the article deals with the proof of Prop.~\ref{pro:quasim0}, namely
the construction of the fully localized quasimodes. Our proof is essentially identical to the procedure used by Vergini-Schneider in \cite{VS05}. The main difference is that the semiclassical tools we use ({\it e.g.} the Quantum Normal Form) are mathematically rigorous, including our remainder estimates. The extension to the partially localized quasimodes of Thm~\ref{thm:mainthm} is similar in spirit to that of \cite{Br13}, albeit with a better control of the constants.

We now present an outline of the proof of Theorem \ref{thm:mainthm}, which starts by constructing the fully localized logarithmic quasimodes of Prop.~\ref{pro:quasim0}.

1) \textbf{Quantum Normal Form} The first task is to choose convenient canonical coordinates near the orbit $\gamma$, namely {\it normal form} coordinates, which map the neighbourhood of $\gamma=\gamma(E_0)\subset p^{-1}(E_0)$ into a neighbourhood of the circle  $\{\tau=x=\xi=0,\,s\in\IT \}$ inside the model phase space $T^*(\IT_s\times \IR_x)$ spanned by the coordinates $(s,x;\tau,\xi)$. In these coordinates, the Hamiltonian $p$ takes the following form, at the order $2N$ in the transverse coordinates:
$$
\tp(s,x;\tau,\xi)= f(\tau) + \llambda(\tau)x\xi + \sum_{2\leq \alpha\leq N}q_\alpha(\tau)(x\xi)^\alpha+\cO((x,\xi)^{2N+1})\,,
$$
where $f(\tau)=E_0+\frac{\tau}{T_0}+\cO(\tau^2)$, with $T_0$ the period of $\gamma=\gamma(E_0)$ and $\llambda(0)>0$ the expansion rate along the unstable direction. 
This normal form can be extended to the quantum level, following works of G\'erard-Sj\"ostrand and Sj\"ostrand 
%\footnote{Although Sj\"ostrand's construction of the QNF is written in the analytic setting, his proof straightforwardly adapts to the $C^{\infty}$ case.} 
\cite{GS87,Sj02}. The result is that,  {\it microlocally near $\gamma$}, the operator $P(\h)$ is unitarily equivalent to a Quantum Normal Form (QNF) operator $\tP(\h)$, a pseudodifferential operator on $\IR\times \IR$ of ``simple'' form; in particular, its principal symbol of $\tP(\h)$ is given by $\tilde{p}$ above.

In this outline we will rather consider a simplified normal form to focus on the important ideas. Namely, we will take the following quadratic Hamiltonian on the phase space $T^*(\IT\times \IR)$:
$$
p_2(s,x;\tau,\xi)\defeq E_0+\frac{\tau}{T_0} + \llambda\,x\,\xi\,,,
$$ 
with $\llambda=\llambda(0)$. The stable (resp. unstable) subspace corresponds to the $\xi$-axis (resp. $x$-axis). 
The corresponding quantum operator is obtained through the Weyl quantization of $p_2$ on $\IT\times \IR\ni (s,x)$:
$$
\tP_2(\h) =\Oph^w(p_2)= E_0 + \frac{1}{T_0} \h D_s + \llambda \big(x\,\h D_x - i\h/2 \big),\qquad s\in \IT=\IR/\IZ,\ x\in \IR\,.
$$
On the right hand side we recognize the selfadjoint 1D dilation operator $\big(x\,\h D_x - i\h/2 \big)$.

2) \textbf{Propagation of transverse Gaussians}:  The state with which we start has the shape of a Gaussian beam: it is a minimal uncertainty Gaussian wavepacket transversely to $\gamma$, and a plane wave along the direction of $\gamma$. In our normal form coordinates $(s,x)$, this state has the form
\begin{equation}\label{e:psi0}
\psi_0(s,x) = \frac{1}{(\pi \h)^{1/4}}\, \exp(2 i \pi  n s)\, \exp\big(-\frac{x^2}{2\h}\big),\qquad \text{for some choice of }n\in\Z.
\end{equation}
This state has, with respect to the model Hamiltonian $\tP_2(\h)$, an average energy 
$E_\h\la \psi_0,\tP_2(\h)\psi_0\ra = \la \psi_0,\tP_2\psi_0\ra = E_0 + \frac{2\pi n}{T_0}\h$, and an energy width
$\cO(\h)$. We first propagate this Gaussian beam using the Schr\"odinger equation;  for the model operator $\tP_2(\h)$ the propagation is explicit, and preserves the factorization of variables:
\begin{equation}\label{e:quadrat}
\psi_{t}(s,x) \defeq [e^{-i t \tP_2/\h}\,\psi_0](s,x) = e^{-itE_\h/\h}\, \frac{e^{-t\llambda/2}}{(\pi \h)^{1/4}}\,e^{2 \pi i n s}\,  \exp\big(-\frac{x^2}{2e^{2t\llambda}\h}\big)\,.
\end{equation}
The longitudinal dependence $e^{2 \pi i n s}$ of the state is unchanged, but its transverse part is now a Gaussian of spatial width $\sim e^{t\llambda}\h^{1/2}$, which grows exponentially when $t>0$ (recall that the unstable manifold is the $x$-axis). The state remains localized in a microscopic neighbourhood of $\gamma$ until the {\it Ehrenfest time} $T_{\llambda}=\frac{1}{2\llambda}| \log \h |$. Notice that, with respect to the full geodesic flow on $S^*M$, $T_{\llambda}$ is only the {\it local} Ehrenfest time associated with the orbit $\gamma$.

To control the evolution of $\psi_0$ with the full QNF Hamiltonian $\tP(\h)$, we will use the results of Combescure and Robert \cite{CR97} who give an explicit procedure to calculate $e^{- i t \tP(\h)/\h} \psi_\h$ in terms of an asymptotic expansion in powers of $\h$ (see also \cite{Hag80}).  Although their expansion usually breaks down at the time $\sim\frac{1}{3}T_{\llambda}$, the special structure of the QNF $\tP(\h)$ allows here to keep the evolution under control until $T_\llambda$. Besides, up to this time the expansion is dominated by the state $\psi_t$ of \eqref{e:quadrat}.

3) \textbf{Time averaging}:  Obviously, each evolved state $\psi_t$ is a quasimode of $\tP_2$ with the same center and width $\cO(\h)$ as $\psi_0$. On the other hand, one can decrease the width of the quasimode by linearly combining these evolved states, namely averaging over the time.
Following the approach in \cite{VS05}, we consider the Ansatz:
\begin{equation} \label{ansatz}
\Psi_\h = \int_{-\infty}^{\infty} \chi(t/T) \, e^{it E_\h/h} \psi_t \,  dt\,,
\end{equation}
where $\chi\in C^\infty_c((-1,1))$ is a smooth cutoff, and $T>0$ is a (large) time. Both $\chi$ and $T$ will be optimized later. To compute the width of this quasimode, we must compare the norms of the states $(\tP-E_\h)\Psi_\h$ and $\Psi_\h$. Both computations rely on first estimating the overlaps $\la \psi_{t},\psi_{t'}\ra$ between evolved states. For our model Hamiltonian $\tP_2$ the overlap is explicit: $\la \psi_{t},\psi_{t'}\ra = \frac{1}{\sqrt{2\cosh((t-t')\llambda)}}$. For the evolution through the full QNF operator $\tP(\h)$, we will see that these overlaps decay essentially in the same way.
An integration by parts shows that
\begin{equation} \label{ibp1}
(\tP-E_\h)\int_{\IR} \chi(t/T) \,e^{it E_\h/h}\,\psi_t\, dt = 
\frac{i\h}{T}\int_{\IR} \chi'(t/T) \, e^{it E_\h/h}\,\psi_t dt\,.
\end{equation}
Hence, the larger the time $T$, the smaller the prefactor in the RHS. Because the evolved states remain localized until the time $T_\llambda$, we will choose the cutoff time $T = (1-\vareps')T_\llambda$ for some small $\vareps'>0$. The prefactor of the above right hand side explains the order of magnitude $C\,\frac{\h}{|\log \h|}$ of the width. To minimize the constant $C$ we also need to optimize the cutoff function $\chi$. This optimization completes the construction of fully localized quasimodes in Prop.~\ref{pro:quasim0}.

4) \textbf{Projections onto logarithmic intervals}: After constructing our logarithmic quasimode $\Psi_\h$ with center $E_\h$, width $C_\gamma \frac{\h}{| \log \h|}$, and which is microlocalized on $\gamma$, we project $\Psi_\h$ on an energy interval of width $2\vareps\h/|\log\h|$ centered near $E_\h$: this projection is then automatically a quasimode of width $\vareps\h/|\log\h|$. Elementary linear algebraic arguments show that, if the center $E'_\h$ of this interval is carefully chosen, the projected state is nontrivial, and has a positive weight on a small neighbourhood of $\gamma$.

\begin{rem}
The compactness of $M$ has not been used in the proof of Prop.~\ref{pro:quasim0}, but only in the final step of the proof of the Theorem, where we need the spectrum of $P(\h)$ to be discrete. This discrete spectrum is also the reason why we required $P(\h)$ to be semiclassically elliptic. Actually we need this discrete spectrum only in a neighbourhood of the energy $E_0$, so the result of Thm~\ref{thm:mainthm} can be generalized, for instance, to the case of Hamiltonians $p(x,\xi)\in S^m(T^*\IR^2)$ such that the energy layer $p^{-1}([E_0-\epsilon,E_0+\epsilon])$ is compact.
\end{rem}

\subsection*{Acknowldegements}
The work in this article began while S.E. was a resident at the Institut de Hautes Etudes Scientifiques, and was completed while being supported by an AMS-Simons Travel Grant.
S.N. was partially supported by the grant Gerasic-ANR-13-BS01-0007-02 from the Agence Nationale de la Recherche.

% \begin{rem}
% Suppose we wanted our quasimodes $\phi_h$ to be semiclassical Lagrangian distributions (see \cite{Duis74}) and would like them to concentrate on a hyperbolic closed trajectory $\gamma$.  As we ask for these quasimodes to be of order $h/|\log h|$, the corresponding semiclassical measures must be invariant under the Hamiltonian flow and therefore the quasimodes must be associated to an invariant Lagrangian $\Lambda$ that contains $\gamma$.  Clearly, the only two possibilities are the unstable/stable manifolds $\Lambda_{\pm}$ through $\gamma$.  However, the only invariant distributions of this form under the Hamiltonian flow are multiples of the arc-length measure on $\gamma$ whose projected measures on $M$ are far from being absolutely continuous with respect to $dVol(g)$.
% \end{rem}

%%%%%%%%%%%%%%%%%%%%%%%%%%%%%%%%%%%%%%%%%%%%%%%%%%%%%%%%%%
%%%%%%%%%%%%%%%%%%%%%%%%%%%%%%%%%%%%%%%%%%%%%%%%%%%%%%%%%%
\section{Semiclassical preliminaries} \label{hprelim}
%%%%%%%%%%%%%%%%%%%%%%%%%%%%%%%%%%%%%%%%%%%%%%%%%%%%%%%%%%
%%%%%%%%%%%%%%%%%%%%%%%%%%%%%%%%%%%%%%%%%%%%%%%%%%%%%%%%%%
In this sections we recall the concepts and definitions from semiclassical Analysis we will need. 
The notations are drawn from the monographies \cite{DimSj99,Z12}. 

\subsection{Pseudodifferential operators on a manifold}
Recall that we define on $\mathbb{R}^{2d}$ the following class of symbols for $m \in \IR^2$:
\begin{eqnarray} \label{symbolclass}
S^{m}(\mathbb{R}^{2d}) &\defeq& \{a \in C^{\infty}(\mathbb{R}^{2d} \times (0, 1] ): |\partial^{\alpha}_x\partial^{\beta}_{\xi} a| \leq C_{\alpha,\beta} \langle\xi\rangle^{m-|\beta|} \}.
\end{eqnarray}
Symbols in this class can be quantized through the $\h$-Weyl quantization into the following pseudodifferential operators acting on $u\in \cS(\IR^d)$:
\begin{equation}
\Oph^{w}(a)\,u(x)\defeq
\frac{1}{(2\pi\hbar)^d}\int_{\mathbb{R}^{2d}}e^{\frac{i}{\hbar}\langle x-y,\xi\rangle}\,a\big(\frac{x+y}{2},\xi;\hbar\big)\,u(y)dyd\xi\,.
\end{equation}
One can adapt this quantization procedure to the case of the phase space $T^*M$, where $M$ is a smooth compact manifold of dimension $d$ (without boundary). Consider a smooth atlas $(f_l,V_l)_{l=1,\ldots,L}$ of $M$, where each $f_l$ is a smooth diffeomorphism from $V_l\subset M$ to a bounded open set $W_l\subset\IR^{d}$. To each $f_l$ correspond a pullback $f_l^*:C^{\infty}(W_l)\rightarrow C^{\infty}(V_l)$ and a symplectic diffeomorphism $\tilde{f}_l$ from $T^*V_l$ to $T^*W_l$: 
$$
\tilde{f}_l:(x,\xi)\mapsto\left(f_l(x),(Df_l(x)^{-1})^T\xi\right).
$$
Consider now a smooth partition of unity $(\phi_l)$ adapted to the previous atlas $(f_l,V_l)$. 
That means $\sum_l\phi_l=1$ and $\phi_l\in C^{\infty}(V_l)$. Then, any observable $a$ in $C^{\infty}(T^*M)$ can be decomposed as: $a=\sum_l a_l$, where 
$a_l=a\phi_l$. Each $a_l$ belongs to $C^{\infty}(T^*V_l)$ and can be pushed to a function $\tilde{a}_l=(\tilde{f}_l^{-1})^*a_l\in C^{\infty}(T^*W_l)$.
We may now define the class of symbols of order $m$ on $T^*M$ (after slightly abusing notation and treating $(x, \xi)$ as coordinates on $T^*W_l$)
\begin{eqnarray}
\label{pdodef}  S^{m}(T^*M)  &\defeq& \{a \in C^{\infty}(T^*M \times (0, 1] ): a=\sum_l a_{l},\ \  \text{ such that } \\ 
\nonumber && \tilde{a}_l\in S^m(\IR^{2d})\quad\text{for each }l\}.
% |\partial^{\alpha}_x\partial^{\beta}_{\xi} \tilde{a}_{l}| \leq C_{\alpha,\beta} \langle\xi\rangle^{m-|\beta|} \}.
\end{eqnarray}
This class is independent of the choice of atlas or smooth partition. 
For any $a\in S^{m}(T^{*}M)$, one can associate to each component $\tilde{a}_l\in S^{m}(\mathbb{R}^{2d})$ its Weyl quantization $\Oph^w(\tilde{a}_l)$, which acts on functions on $\IR^{2d}$.
To get back to operators acting on $M$, we consider smooth cutoffs $\psi_l\in C_c^{\infty}(V_l)$ such that $\psi_l=1$ close to the support of $\phi_l$, and define the operator:
\begin{equation}
\label{pdomanifold}\Oph(a)u \defeq 
\sum_l \psi_l\times\left(f_l^*Op_{\hbar}^w(\tilde{a}_l)(f_l^{-1})^*\right)\left(\psi_l\times u\right),\quad u\in C^{\infty}(M)\,.
\end{equation}
This quantization procedure maps (modulo smoothing operators with seminorms $\mathcal{O}(\hbar^{\infty})$) symbols $a\in S^{m}(T^{*}M)$ onto the space $\Psi^{m}_\hbar(M)$ of semiclassical pseudodifferential 
operators of order $m$. The dependence in the cutoffs $\phi_l$ and $\psi_l$ only appears at order 
$\hbar\Psi^{m-1}_\hbar$ (\cite[Thm 18.1.17]{Ho85} or \cite[Thm 9.10]{Z12}), so that the principal symbol map $\sigma_0:\Psi^{m}_\hbar(M)\rightarrow S^{m}(T^*M)/\h S^{m-1}(T^*M)$ is 
intrinsically defined. Most of the rules and microlocal properties (for example the composition of operators, the Egorov and Calder\'on-Vaillancourt Theorems) that hold on $\mathbb{R}^{d}$ can be extended to the manifold case.

An important example of a pseudodifferential operator is the semiclassical Laplace-Beltrami operator $P(\h)=-\frac{\h^2}{2} \Delta_g$.  In local coordinates $(x; \xi)$ on $T^*M$, the operator can be expressed as $Op_h^w \big( |\xi|^2_g + \h (\sum_j b_j(x) \xi_j + c(x)) + \h^2 d(x) \big)$ for some functions $b_j,c,d$ on $M$.  In particular, its semiclassical principal symbol is the function $|\xi |^2_g \in S^{2}(T^*M)$.  Similarly, the principal symbol of the Schr\"odinger operator 
$-\frac{\h^2}{2} \Delta_g + V(x)$ (with $V\in C^\infty(M)$) is $|\xi |^2_g + V(x) \in S^{2}(T^*M)$.

We will need to consider a slightly more general class of symbols than the class \eqref{symbolclass}.  Following \cite{DimSj99}, for any $0 \leq \delta < 1/2$ we introduce the symbol class
\begin{eqnarray} \label{singsymbolclass}
\nonumber S^{m}_{\delta}(\mathbb{R}^{2d}) &\defeq& \{a \in C^{\infty}(\mathbb{R}^{2d} \times (0, 1] ): |\partial^{\alpha}_x\partial^{\beta}_{\xi} a| \leq C_{\alpha,\beta} \h^{-\delta|\alpha + \beta|}\langle\xi\rangle^{m-|\beta|} \}.
\end{eqnarray}
These symbols are allowed to oscillate more strongly when $\h\to 0$.
All the previous remarks regarding the case of $\delta=0$ transfer over in a straightforward manner. This slightly ``exotic'' class of symbols can be adapted on $T^*M$ as well. For more details, see \cite[Section 3]{DG13} or \cite[Section 14.2]{Z12}.

%%%%%%%%%%%%%%%%%%%%%%%%%%%%%%%%%%%%%%%%%%%%%%%%%%%%%%
\section{Normal form around a hyperbolic trajectory}\label{s:QNF}
%%%%%%%%%%%%%%%%%%%%%%%%%%%%%%%%%%%%%%%%%%%%%%%%%%%%%%

Although the 2-dimensional quantum Birkhoff normal form (abbreviated by QNF) that we present below from \cite{Sj02} was proved in the setting of real analytic Hamiltonians on $T^*\IR^{2}$, its proof directly transfers into the $C^{\infty}$ setting, and we will present a sketch of it for the sake of the reader. 

Let us recall our setting. $(M,g)$ is a smooth surface without boundary and $P(\h)$ is a pseudodifferential operator on $M$ with principal symbol $p\in S^m(T^*M)$ assumed to be independent of $\h$, and formally selfadjoint on $L^2(M,dg)$. We assume that for a {\it regular} energy level $E_0$ (meaning that $dp$ does not vanish on the energy shell $p^{-1}(E_0)$), the Hamiltonian flow $exp(tH_p)$ admits a hyperbolic periodic orbit $\gamma=\gamma(E_0) \subset p^{-1}(E_0)$, with period $T(E_0)$. The hyperbolicity implies that this orbit is isolated in $p^{-1}(E_0)$, and that it belongs to a smooth family $\{\gamma(E)\}$ of hyperbolic orbits, with which each inside of $p^{-1}(E)$. The linearized Poincar\'e map of $\gamma$ has eigenvalues $\Lambda(E_0),\ \Lambda(E_0)^{-1}$ with $|\Lambda(E_0)|>1$; $\Lambda(E_0)$ is positive (resp. negative) if the local unstable manifold of $\gamma$ is orientable (resp. nonorientable). In both cases we denote $|\Lambda(E_0)|=\exp(T(E_0)\lambda(E_0))$, so that $\lambda(E_0)$ is the expansion rate per unit time along the unstable direction. This parameter will play an important role in the following.

Denoting by $\alpha$ the Liouville 1-form on $T^*M$, which reads $\alpha=\sum_{j=1}^2 \eta_j\,dy_j$ in local coordinates $(y_i,\eta_i)$ it. We call 
$$
\varphi=\varphi(E_0)\defeq \int_{\gamma(E_0)}\alpha,\qquad \text{the action of the orbit $\gamma(E_0)$.}
$$ 
%%%%%%%%%%%%%%%%%%%%%%%%
\begin{prop} \label{prop:qbnf} \cite{Sj02}
i) There exists a Birkhoff normal form around the orbit $\gamma(E_0)$. Namely, for any integer $N>0$, there exist local symplectic coordinates $(s, \tau, x, \xi) \in T^*(S^1 \times \IR)$ and a smooth local canonical transformation 
$$
\kappa=\kappa_N : \neigh \big( \gamma_0\defeq\{(s,0,0,0),\,s\in S^1 \}, T^*(\IT \times \R) \big) \rightarrow \neigh\big(\gamma(E_0), T^*M \big)\,,
$$
single-valued if the unstable manifold of $\gamma(E_0)$ is orientable, and otherwise double-valued with $\kappa(s-1,\tau,x, \xi) = \kappa(s, \tau, -x, -\xi)$, such that 
\begin{equation}\label{eq:BNF}
\tp_N\defeq p \circ \kappa = f(\tau) + \llambda(\tau)x \xi + q^{(N)}(\tau, x \xi) + \mathcal{O}((x,\xi)^{N+1})\,,\qquad q^{(N)}(\tau, x \xi) = \sum_{4 \leq 2 \alpha \leq N} q_{\alpha}(\tau)(x \xi)^{\alpha}\,.
\end{equation}
Here $f(\tau)=E_0+\frac{\tau}{T_0}+\cO(\tau^2)$, and to each value $\tau\in [-\eps,\eps]$ corresponds a periodic orbit of energy $E(\tau)$, with $E(0)=E_0$. By a slight abuse of notation we denote $\lambda(\tau)=\lambda(E(\tau))$ the unstable expansion rate for the orbit $\gamma(E)$. The remainder means that $|\cO((x,\xi)^{N+1})|\leq C (|x|+ |\xi|)^{N+1}$ in the indicated neighborhood of $T^*(\IT \times \IR)$.  

ii) Recall that $\varphi$ is the action of the orbit $\gamma(E_0)$. Given $\h\in (0,1]$, let $\cS^{o/no}_{\phi/\h}(\IT \times \R)$ be the space of smooth functions $u(t,x)$ on $\R \times \R$, in the Schwartz class in the second variable, with the periodicity property $u(s+1,x) = e^{-i\phi/\h}u(s,x)$ in the orientable case, resp. $u(s+1,x) = e^{-i\phi/\h}u(s,-x)$ in the non-orientable case.

There exists a corresponding quantum Birkhoff normal form, namely a semiclassical Fourier integral operator $U_N: \cS^{o/no}_{\varphi/\h}(\IT \times \R) \rightarrow C^\infty(M)$ quantizing $\kappa_N$ microlocally near $\gamma_0$, microlocally unitary near $\gamma(E_0)\times\gamma_0$ (using the natural $L^2(\IT\times \IR)$ structure). This operator conjugates $P=P(\h)$ to the following Quantum Normal Form operator: 
\begin{equation}\label{e:QNF-conjugation}
%\tilde{\chi}_1\, U^{*}\, \chi_1\, \,P\, \chi_2\, U\, \tilde{\chi}_2 = P^{(N)} + R_{N+1}\,.
 U_N^{*}\, \,P\, U_N = P^{(N)} + R_{N+1}\,.
\end{equation}
Here $P^{(N)}$, $R_{N+1}$ are pseudodifferential operators on $\cS_{\varphi/\h}(\IT \times \R)$, microlocally supported near $\gamma_0$, with Weyl symbols\footnote{Since $\IT=\IR/\IZ$, the Weyl quantization is intrinsically defined on $T^*(\IT\times\IR)$ by pullback from the quantization on $T^*\IR^2$.} 
\begin{equation}\label{e:higher-QNF}
p^{(N)}(\tau, x, \xi; \h) = \sum_{j=0}^{N} \h^j\,p_j^{(N)}(\tau, x, \xi),\qquad r_{N+1}(s,\tau, x, \xi; h) = \cO\big((\h,x,\xi)^{N+1}\big)\,,
\end{equation}
where $p_0^{(N)}=\tp_N$ above, and the higher order terms are of the same form.
%$\chi_1, \chi_2$  (resp. $\tilde{\chi_1}, \tilde{\chi}_2$) are pseudodifferential operators cutoffs around $\gamma(E_0)$ and $\{ \tau=x=\xi=0\}$, respectively.  
Moreover, $p^{(N+1)}(\tau, x, \xi; \h) - p^{(N)}(\tau, x, \xi; \h) = \cO\big( (\h,x,\xi)^{N+1} \big)$.
\end{prop}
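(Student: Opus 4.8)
\emph{Proof proposal (following \cite{Sj02}; see also \cite{GS87}).} The plan is to transpose Sjöstrand's analytic construction to the $C^\infty$ category, in two stages: the classical Birkhoff normal form (i), then its quantization (ii). First I would set up a semi-global symplectic model near $\gamma$. Since $E_0$ is a regular value, $\gamma(E_0)$ is a compact hyperbolic orbit embedded in a smooth one-parameter family $\{\gamma(E)\}$; a symplectic tubular-neighbourhood (Weinstein-type) construction adapted to this family, built over a transverse Poincar\'e section, produces coordinates $(s,\tau,x,\xi)\in T^*(S^1\times\R)$ near $\gamma_0$ in which the energy levels are $\{\tau=\mathrm{const}\}$, the orbit is $\gamma_0=\{x=\xi=\tau=0\}$, and $p\circ\kappa_0=f(\tau)+\cO((x,\xi)^2)$ with $f(\tau)=E_0+\tau/T_0+\cO(\tau^2)$, the coefficient $1/T_0=f'(0)\neq 0$ coming from regularity. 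The transverse Hessian of $p$ is a smooth $\tau$-family of nondegenerate quadratic forms with the hyperbolic spectrum $\{\pm\lambda(\tau)\}$, and a smoothly $\tau$-dependent linear symplectic change brings it to $\lambda(\tau)\,x\xi$, giving the quadratic model $p_2=f(\tau)+\lambda(\tau)x\xi$. Whether the transverse frame closes up or flips after one turn around $\gamma$ is precisely the orientable/non-orientable dichotomy, and in the latter case $\kappa_0$ is only double-valued, with $\kappa_0(s-1,\tau,x,\xi)=\kappa_0(s,\tau,-x,-\xi)$.

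Next I would run the Birkhoff reduction, eliminating order by order in the total degree $k$ in $(x,\xi)$ all non-resonant terms. Conjugating by the time-one flow of a generating function $g_k$ shifts the degree-$k$ part of the symbol by $H_{p_2}g_k$, where the homological operator is the Hamiltonian vector field
$$
H_{p_2}=\tfrac{1}{T_0}\,\partial_s+\lambda(\tau)\,(x\partial_x-\xi\partial_\xi)\,.
$$
Decomposing into Fourier modes $e^{2\pi i n s}$ in $s$ and monomials $x^a\xi^b$ transversally, $H_{p_2}$ is boundedly invertible on every mode except the $s$-independent ones with $a=b$; these resonant monomials $(x\xi)^\alpha$ cannot be removed and assemble into $q^{(N)}(\tau,x\xi)$. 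Composing the finitely many canonical transformations $\exp(H_{g_k})$, $2\leq k\leq N$ (each equal to the identity outside a small neighbourhood of $\gamma_0$), gives $\kappa=\kappa_N$ and the form \eqref{eq:BNF}, with remainder $\cO((x,\xi)^{N+1})$ and the same orientation behaviour. This settles (i).

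For (ii) I would quantize each real generating function $g_k$ into the unitary semiclassical FIO $\exp(-\tfrac{i}{\h}\Oph^w(g_k))$, take $U_N$ to be the product of these composed with the FIO quantizing the initial chart $\kappa_0$, and apply Egorov's theorem: $U_N^{*}\,P\,U_N$ is then a pseudodifferential operator microlocally supported near $\gamma_0$, with principal symbol $\tp_N$ and full Weyl symbol of the form $\sum_{j\leq N}\h^j p_j^{(N)}(\tau,x,\xi)+\cO((\h,x,\xi)^{N+1})$. The $\h$-subleading symbols $p_j^{(N)}$ ($j\geq 1$) are then normalized by the \emph{same} homological scheme at the operator level: the quantum homological operator $\tfrac{i}{\h}[\Oph^w(p_2),\cdot]$ preserves the transverse degree and the $s$-Fourier degree and agrees with $H_{p_2}$ on the leading part, so it has the identical resonant null space; solving the resulting equations order by order in $\h$ and in $(x,\xi)$ lets one take each $p_j^{(N)}$ to be a function of $(\tau,x\xi)$ alone. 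Realness of the $g_k$ makes $U_N$ microlocally unitary near $\gamma(E_0)\times\gamma_0$ for the natural $L^2(\IT\times\R)$ structure; the twisted periodicity defining $\cS^{o/no}_{\varphi/\h}$ records the holonomy phase $e^{i\varphi/\h}$ (together with the orientation sign in the non-orientable case) picked up going once around $\gamma$, and is exactly what makes $U_N$ well-defined and unitary on that space. Quantizing the $\cO((x,\xi)^{N+1})$ leftover, together with the $\cO(\h^{N+1})$ tail of the symbol calculus, yields $R_{N+1}$ with symbol $\cO((\h,x,\xi)^{N+1})$, so \eqref{e:QNF-conjugation}--\eqref{e:higher-QNF} follow; and $p^{(N+1)}-p^{(N)}=\cO((\h,x,\xi)^{N+1})$ because passing from the order-$N$ to the order-$(N+1)$ construction only alters terms of that order.

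\textbf{Main obstacle.} The genuinely delicate part is the quantum side of the last stage: solving the quantum homological equations for every $\h$-subleading symbol while staying in the $C^\infty$ category with uniform remainder control, and setting up the FIO calculus on the twisted circle bundle $\cS^{o/no}_{\varphi/\h}$ so that $U_N$ is microlocally unitary with the correct Bohr--Sommerfeld/orientation phase. The classical steps are standard once the symplectic tubular model near $\gamma$ is in place; it is the bookkeeping of the $\h$-expansion and the twisted periodicity that requires care.
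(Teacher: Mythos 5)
Your proposal is correct and follows essentially the same route as the paper's sketch: construct an initial symplectic model $\kappa_2$ bringing $p$ to $f(\tau)+\lambda(\tau)x\xi+\cO((x,\xi)^3)$, run the Birkhoff iteration with generating functions whose homological equations leave only the resonant monomials $(x\xi)^\alpha$ (the paper phrases this as ``solvable ODEs'' for the $G_j$, you make the homological operator explicit), then quantize and normalize the $\h$-subleading symbols by the analogous operator-level scheme, with the twisted periodicity of $\cS^{o/no}_{\varphi/\h}$ encoding the holonomy/orientation phase. The only notable presentational difference is at the first stage: the paper anchors the coordinates directly on the invariant manifolds $\Gamma_\pm$ (so $\tau$ is constant on each $\Gamma_\pm(E)$ from the start) rather than diagonalizing the transverse Hessian after a generic tubular-neighbourhood map; this is a cleaner geometric set-up but leads to the same $\kappa_2$. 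Two small imprecisions in your write-up, neither fatal: (a) ``the energy levels are $\{\tau=\mathrm{const}\}$'' contradicts your own $p\circ\kappa_0=f(\tau)+\cO((x,\xi)^2)$ — what is true is that $\tau$ parametrizes the orbit family $\Gamma$ (and, in the paper's version, is constant on the local stable/unstable manifolds); (b) the coefficient of $\partial_s$ in the homological vector field should be $f'(\tau)$, not the constant $1/T_0$, and there is a transversal-degree-two correction $\lambda'(\tau)x\xi\,\partial_s$ — both real and nonvanishing, so the resonance analysis (kernel $=$ $s$-independent $(x\xi)^\alpha$) is unchanged.
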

%%%%%%%%%%%%%%%%%%%%%%%%
\begin{proof} (Sketch, after \cite{GS87,Sj02})

{\it i)} We will only consider the orientable case. The first step of the proof lies in the construction of suitable symplectic coordinates $\kappa_N (s,\tau;x,\xi) = (y,\eta)$ such that the classical Hamiltonian $p=\sigma_0(P)$ in these coordinates reads as in \eqref{eq:BNF}. That is, we construct the classical normal form near the orbit $\gamma(E_0)$. Let us recall this construction.

As mentioned above, the hyperbolicity of $\gamma(E_0)$ implies 
that in some range $E\in [E_0-\epsilon_0,E_0+\epsilon_0]$, the flow admits a hyperbolic orbit $\gamma(E)$, so that 
$\Gamma \defeq \bigcup_{| E - E_0| \leq \epsilon_0}\gamma(E)$
forms a smooth symplectic submanifold.  
If we call $\Gamma_{\pm}(E)$ the local unstable/stable manifolds of $\gamma(E)$, which are smooth immersed Lagrangian leaves, their unions
$\Gamma_{\pm} \defeq \bigcup_{| E - E_0| < \epsilon_0} \Gamma_{\pm} (E)$
are smooth involutive outgoing/incoming submanifolds of codimension 1. One can use the implicit function theorem to define a smooth function $\xi$ near $\gamma(E_0)$ such that locally $\Gamma_+=\{\xi=0\}$, and a symplectically conjugate coordinate $x$ with $x\restriction_{\Gamma_-}=0$. We may also setup canonical coordinates $(s,\tau)$ on $\Gamma$, with $s$ multivalued, such that each orbit $\gamma(E)$ corresponds to a circle $\{\tau=\tau(E),\ s\in[0,1)\}$. This set of coordinates on $\Gamma$ can be extended to a neighbourhood of $\Gamma$, such as to produce a system of canonical coordinates $(y,\eta)=\kappa_2(s,\tau,x,\xi)$ with $\tau=\tau(E)$ constant on each submanifold $\Gamma_{\pm}(E)$. In these coordinates near $\gamma(E_0)$, the Hamiltonian is of the following form:
$$
p\circ \kappa_2(t,\tau;x,\xi)=f(\tau)+\llambda(\tau)x\xi + \cO\big((x,\xi)^3\big)\,,
$$
where the function $\tau\mapsto f(\tau)$ is the inverse function of $E\mapsto \tau(E)$.
%Through an averaging procedure, we can get rid of the $t$-dependence in $\llambda(t,\tau)$, such as to get the ``quadratic'' part of our NF. 
In order to normalize the higher order terms in $(x,\xi)$, we
can iteratively construct functions $(G_j(t,\tau,x,\xi))_{j\geq 3}$ (which are homogeneous polynomials of degree $j$ in $(x,\xi)$, satisfying certain solvable ODEs), and use the Hamiltonian flows $e^{tH_{G_j}}$ they generate, such as to obtain, at the order $N$, the normal form:
$$ 
\tp_N=p\circ\kappa_2 \circ e^{H_{G_3}} \circ \dots \circ e^{H_{G_N}} = 
f(\tau) + \llambda(\tau)x \xi + \sum_{2 \leq \alpha \leq N} q_{\alpha}(\tau) (x\xi)^N + \mathcal{O}((x,\xi)^{N+1})\,,
$$
which is the desired Birkhoff normal form. The induced change of coordinates $\kappa_N=\kappa_2 \circ e^{H_{G_3}} \circ \dots \circ e^{H_{G_N}}$ maps a neighbourhood of $\gamma_0\in T^*(\IT\times\IR)$ to a neighbourhood of $\gamma(E_0)\in T^*M$. Notice that $\kappa_N$ and $\kappa_{N+1}$ have the same Taylor expansion in the coordinates $(x,\xi)$ up to order $N$.

\medskip

{\it ii)} One can adapt this normal form construction to the quantum framework (a general discussion on quantum normal forms can be found in \cite[Chap.12]{Z12}). The task is to construct a semiclassical Fourier Integral Operator (FIO) $\cU_0$ quantizing the canonical transformation $\kappa_N$. The construction of the phase function of $\cU_0^*$ shows that this operator maps the space $C^\infty(M)$ to the space $\cS_{\varphi/\h}(\IT\times \IR)$ of functions which are $\IZ$-periodic functions in the variable $s$, twisted by the phase $e^{i\varphi/\h}$ (see for instance \cite[Eq.(2.9)]{GS87}). The FIO $\cU_0$ is elliptic from a neighbourhood of $\gamma_0$ to a neighbourhood of $\gamma(E_0)$, and can be constructed such as to be microlocally unitary between two such neighbourhoods.

Conjugating our quantum Hamiltonian $P(\h)$ with $\cU_0$, we obtain a pseudodifferential operator $\tP_0(\h)\defeq U_0^* P(\h) U_0$ on $\cS_{\varphi/\h}(\IT\times \IR)$, whose principal symbol is of the form $\tp_N(s,\tau,x,\xi)$.
The subprincipal symbol of $\tP_0(\h)$ is, a priori, an arbitrary function of $(s,\tau;x,\xi)$. By iteratively solving a sequence of transport equations, we may correct the symbol of the FIO $\cU_0$ into a FIO $\cU_N$, so that the symbol of the conjugated operator $\tP_N(\h)=U_N^* P(\h) U_N$ takes the form \eqref{e:higher-QNF}. 

Our main point is that the above construction, originally presented in the case of analytic symbols and operators on the Euclidean space \cite{GS87,Sj02}, can be generalized to the case of smooth objects on a smooth manifold (the FIO $\cU_0$ can be constructed with a real valued phase function). Since the construction is local near $\gamma(E_0)$, topological properties of the manifold $M$ (e.g. the homotopy class of $\gamma(E_0)$) do not come into play.
\end{proof}

\begin{rem}\label{r:higher-dims}
Similar quantum normal forms were established in higher dimension. The references \cite{Gui96,Zel98} are specific to the homogeneous microlocal setting. A higher dimensional formulation of the QNF we use was given in \cite{GuiPaul09}, albeit the proof for the case of a hyperbolic trajectory is only sketched. A similar QNF was also derived in \cite{Chr07}, but at the cost of a remaining elliptic factor depending on the longitudinal variable.

Using these higher dimensional QNF, one can probably generalize the quasimode construction we are presenting below. 
The main extra difficulty is to compute the corresponding square norm of the quasimode $\Psi$ as performed in Lemma~\ref{l:norm-qmode}. This computation requires to understand well the overlaps between evolved coherent states in higher dimension (the generalization of \eqref{e:phi0Dphi0}), which will depend on the spectrum and Jordan structure of the linearized Poincar\'e map of $\gamma(E_0)$. 
Since obtaining sharp constants is one objective of the present article, we restrict ourselves to the 2-dimensional case.
\end{rem}

%%%%%%%%%%%%%%%%%%%%%%%%%%%%%%%%%%%%%%%%%%%%%%%%%%%%%%%%%%%%%%%%%%%%%%%%%%%%%%
\section{Propagation of a Gaussian wavepacket at the hyperbolic fixed point} \label{propagsec}
%%%%%%%%%%%%%%%%%%%%%%%%%%%%%%%%%%%%%%%%%%%%%%%%%%%%%%%%%%%%%%%%%%%%%%%%%%%%%%

The construction of our quasimode will be performed on the Quantum Normal Form side, that is on the model space $\IT\times \IR$. As explained in $\S$\ref{s:outline}, it will be based on an initial state which is a plane wave in the longitudinal direction, and Gaussian along the transverse direction. We must nevertheless take into account that the QNF operator acts on twisted-periodic functions. From the action $\varphi(E_0)$ and $\h\in (0,1]$ we setup the number 
$$
\varphi_\h\defeq 2\pi \big[ \frac{\varphi(E_0)}{2\pi\h}\big],\quad \text{where $[s]$ represents the integral part of $s\in\IR$. }.
$$
It satisfies $e^{i\varphi_\h}=e^{i\varphi(E_0)/\h}$, so the spaces of twisted periodic functions can be denoted by $\cS^{o/no}_{\varphi_\h}(S^1\times \IR)$. 
With this twist being taken into account, our initial state will be of the same form as in \eqref{e:psi0}:
\begin{equation}\label{e:initial}
\psi_0(s,x)\defeq e^{i(2 \pi n -\varphi_\h)s}\, \frac{1}{(\pi \h)^{1/4}}\, \exp\big(-\frac{x^2}{2\h}\big),\qquad \text{for some arbitrary }n\in\Z.
\end{equation}
Due to the parity of $x\mapsto e^{-x^2/2\h}$, this function belongs to both spaces $\cS^{o/no}_{\varphi_\h}(\IT\times \IR)$, and it will allow us to treat both the orientable and nonorientable cases.

We will select the index $n$ to be uniformly bounded when $\h\to 0$, so that 
$$
\Oph^w(f(\tau))\psi_0=f(\h D_s) \psi_0 = f(\h(2\pi n - \varphi_\h))\psi_0 \quad\text{satisfies }f(\h(2\pi n - \varphi_\h))=E_0+\cO(\h)\,.
$$
We will evolve the state $\psi_0$ through the Schr\"odinger equation generated by the QNF operator $P^{(N)}$ described in Prop.~\ref{prop:qbnf},{\it ii)}: our task will now be to describe the states
$$
\psi^{(N)}_t = e^{-iP^{(N)}t/\h}\,\psi_0,\quad \text{for times }|t|\leq C\,|\log\h|\,.
$$
The Weyl symbol $p^{(N)}$ of $P^{(N)}$ can be written 
\begin{equation}
p^{(N)}(\tau, x, \xi; h) = \sum_{\alpha=0}^{N} q^{\alpha}(\tau;\h)\,(x\xi)^\alpha\,,
\end{equation}
where the symbols $ q^{\alpha}(\tau,\h)$ expand as
$$
q^\alpha(\bullet;\h)=\sum_{i=0}^N \h^i\,q^\alpha_i(\bullet)\,,\quad\text{with the special values } q^0_0(\tau)=f(\tau),\quad q^1_0(\tau)=\llambda(\tau)\,.
$$ 
The major advantage of this QNF is the separation between the variables $(s,\tau)$ and $(x,\xi)$. As a result, the Schr\"odinger evolution can be reduced to a family of 1-dimensional problems. Indeed, since $e^{i(2 \pi n -\varphi_\h)s}$ is an eigenfunction of $\h D_s$, the state $\psi_t$ can be factorized into longitudinal and transversal parts,
$$
\psi^{(N)}_t(s,x) = e^{-itq^0/\h}\, e^{i(2 \pi n -\varphi_\h)s} \varphi^{(N)}_t(x)\,,\quad\text{where we use the shorthand notation } 
q^0 = q^0((2\pi n-\varphi_\h)\h;\h)\,.
$$
Notice that $q^0=E_0+\cO(\h)$. 
The transversal part $\varphi^{(N)}_t$ satisfies the 1D Schr\"odinger equation
\begin{align}
i\h \partial_t \varphi^{(N)}_t &= Q^{(N)}(\h)\,\varphi^{(N)}_t,\quad Q^{(N)}=\Oph^w(q^{(N)}),\nonumber\\
q^{(N)}(x,\xi;\h) &= \sum_{\alpha=1}^{N} q^{\alpha}\,(x\xi)^\alpha\,,\qquad \text{where we took }\ \ 
q^\alpha = q^\alpha((2\pi n-\varphi_\h)\h;\h)\,.\label{e:q^(N)}
\end{align}
We thus end up with analyzing the evolution of the 1D Gaussian state $\varphi_0(x)=(\pi \h)^{-1/4}\, e^{-\frac{x^2}{2\h}}$ under the effective 1D Schr\"odinger equation. 

Let us truncate the symbol $q^{(N)}$ to the quadratic order, that is keep from \eqref{e:q^(N)} 
the term
\bequ
q^{(N)}_q = q^1\,x\xi\,,\qquad \text{where}\ \   q^1=q^1((2\pi n-\varphi_\h)\h;\h)=\llambda(0)+\cO(\h)\,.
\eequ
The evolved state through this quadratic operator is easily expressed:
\bequ\label{e:quad2}
\exp\big(-\frac{it}{\h} Q^{(N)}_q \big) \varphi_0 = \cD_{tq^1}\varphi_0\,, 
\eequ
and we used the unitary dilation operator $\cD_\beta:L^2(\IR)\to L^2(\IR)$:
\bequ
\cD_\beta u (x)\defeq \exp(-i\beta \Oph^w(x\xi)/\h) u (x) = e^{-\beta/2} u (e^{-\beta} x)\,.
\eequ
The state $\cD_{tq^1}\varphi_0$ is known in the literature as a {\it squeezed coherent state} \cite{CR97}: it is a Gaussian state, but with a different width from that of $\psi_0$. This state is still centered at the origin in phase space, $(x,\xi)=(0,0)$, which is a fixed point for the classical evolution generated by the Hamiltonian $q^{(N)}_0$.

For $N\geq 2$, the operator $Q^{(N)}$ includes a nonquadratic part 
\bequ\label{e:q_nq}
q^{(N)}_{nq}\defeq q^{(N)}-q^{(1)} = \sum_{\alpha=2}^{N}q^\alpha (x\xi)^\alpha\,.
\eequ
We show below that, in the semiclassical limit, this nonquadratic component of $Q^{(N)}$ will induce small corrections to the state \eqref{e:quad2} evolved through the quadratic part. To justify this fact we will use a Dyson expansion, like in the general treatment of \cite[Section 3]{CR97}, which considered the evolution of coherent states through arbitrary Hamiltonians. Here, the special form of $q^{(N)}$ will facilitate our task and will produce smaller remainder terms. 
Remember that the state \eqref{e:quad2} is a squeezed coherent state. The corrections due to the nonquadratic part $Q^{(N)}_{nq}$ are linear combinations of {\it squeezed excited states}.

% In our setting, the coefficients $q^\alpha=q^\alpha(\tau_n;\h)$ depend on $\h$ in a bounded way: 
% $$
% q^\alpha(\tau_n;\h) = q^\alpha_0(0)+\cO(\h)\,,
% $$
% but they may not satisfy a full expansion in powers of $\h$ due to the "jumps" of $\varphi_\h$ as $\h\to 0$. As opposed to the computations in \cite{CR97}, we will keep the $\h$ dependence in those coefficients, instead of expanding them. 

\medskip

{\bf Squeezed excited states} The initial coherent state $\varphi_0$ is the ground state of the standard 1D Harmonic oscillator $(\h D_x)^2+x^2$. We will call $(\varphi_m)_{m\geq 1}$ the $m$-th excited states, which are obtained by iteratively applying to $\varphi_0$ the ``raising operator'' $a^*\defeq \Oph(\frac{x-i\xi}{\sqrt{2\h}})$ and normalizing: 
\bequ\label{e:excited}
\varphi_m = \frac{(a^*)^m}{\sqrt{m!}} \varphi_0 \Longrightarrow \varphi_m(x) = \frac{1}{(\pi \h)^{1/4}2^{m/2}\sqrt{m!}}\,H_m(x/\h^{1/2})\, e^{-\frac{x^2}{2\h}}\,,
\eequ
where $H_m(\cdot)$ is the $m$-th Hermite polynomial. 
By applying the unitary dilation operator $\cD_\beta$,
we obtain a family of squeezed excited states $(\cD_\beta \varphi_m)_{m\geq 1}$.

Following the strategy of \cite{CR97}, we will show the following approximate expansion for $e^{-it Q^{(N)}/\h ) \varphi_0}$ in terms of squeezed excited states.
%%%%%%%%%%%%%%%%%%%%
\begin{prop} \label{prop:propag}
For every $l\in \IN$, there exists a constant $C_l > 0$ and time dependent coefficients $c_{m}(t, \h)\in\C$ for $0\leq m \leq 2l$, such that the following estimate holds for any $\h\in (0,1]$:
\begin{equation} \label{remest}
\forall t\in\IR,\qquad \| e^{itQ^{(N)}/\h} \varphi_0 - \cD_{tq^1}\varphi_{0} - \sum_{m=0}^{2l} c_{m}(t,\h) \cD_{tq^1}\varphi_{2m} \| \leq C_l\,(|t|\h)^{l+1}\,.
\end{equation}
The coefficients $c_{m}(t,\h)$ are polynomials in $(t,\h)$, with degree at most $l$ in the variable $t$. 
Besides, $c_0(t,\h)=\cO_t(\h)$, $c_1(t,\h)=\cO_t(\h^2)$, $c_{m}(t,\h) = \cO_t(\h^{[(m+1)/2]})$ for $2\leq m\leq 2l$. 
\end{prop}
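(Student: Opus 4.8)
The strategy is a Dyson (interaction-picture) expansion around the quadratic part $Q^{(N)}_q = q^1\,x\xi$, exactly as in \cite{CR97}, but exploiting the fact that here the perturbation $Q^{(N)}_{nq}=\sum_{\alpha=2}^N q^\alpha(x\xi)^\alpha$ is itself a polynomial in $x\xi$, which keeps the combinatorics under control. First I would pass to the interaction picture: write $e^{-itQ^{(N)}/\h} = \cD_{tq^1}\, W_t$, where $W_t = e^{itQ^{(N)}_q/\h}\,e^{-itQ^{(N)}/\h}$ (using $\cD_{tq^1}=e^{-itQ^{(N)}_q/\h}$) solves $i\h\,\partial_t W_t = \widetilde Q_t\, W_t$, $W_0=\mathrm{Id}$, with the conjugated perturbation $\widetilde Q_t \defeq e^{itQ^{(N)}_q/\h}\,Q^{(N)}_{nq}\,e^{-itQ^{(N)}_q/\h} = \cD_{-tq^1}\,Q^{(N)}_{nq}\,\cD_{tq^1}$. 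The key algebraic observation is that $\cD_\beta$ conjugates $x\xi$ to itself up to... more precisely $\cD_{-\beta}\,\Oph^w(x\xi)\,\cD_\beta = \Oph^w(x\xi)$ since $x\xi$ is the generator of the dilation, so each monomial $\Oph^w((x\xi)^\alpha)$ is \emph{invariant} under this conjugation up to lower-order $\h$ terms coming from the Weyl calculus (the Moyal product of $(x\xi)^\alpha$ with itself). Hence $\widetilde Q_t = \sum_{\alpha=2}^N q^\alpha\,\Oph^w((x\xi)^\alpha) + \h(\cdots)$, with the crucial gain that $q^\alpha = q^\alpha((2\pi n-\varphi_\h)\h;\h) = \cO(\h^0)$ but each application of $\widetilde Q_t/\h$ to a squeezed excited state, when expanded, carries a compensating power of $\h$ because $\Oph^w((x\xi)^\alpha)$ acting on the Gaussian-based Hermite functions produces $\h^\alpha$ times bounded coefficients. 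This is where the estimate $(|t|\h)^{l+1}$ in \eqref{remest} rather than $(|t|\h^{1/3})^{l+1}$ originates.

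Second, I would iterate the Duhamel formula $W_t = \mathrm{Id} + \frac{1}{i\h}\int_0^t \widetilde Q_{t_1} W_{t_1}\,dt_1$ to order $l+1$, writing $W_t\varphi_0 = \sum_{k=0}^{l}\big(\tfrac{1}{i\h}\big)^k\int_{0\le t_k\le\cdots\le t_1\le t}\widetilde Q_{t_1}\cdots\widetilde Q_{t_k}\,\varphi_0\;dt_k\cdots dt_1 + \text{Rem}_{l+1}$. Then I would compute the action of each $\widetilde Q_{t_j}$ on the even Hermite functions $\varphi_{2m}$: since $\Oph^w(x\xi) = \tfrac{\h}{2}(a^*a + aa^*)\cdot(\text{const})$ acts diagonally on the $\varphi_m$ with eigenvalue $\sim\h(m+\tfrac12)$, and $\Oph^w((x\xi)^\alpha)$ is a polynomial in $a,a^*$ of total degree $2\alpha$ preserving parity, each $\widetilde Q_{t_j}$ maps $\varphi_{2m}$ to a finite linear combination of $\varphi_{2m'}$ with $|m'-m|\le \alpha\le N$ — but, and this is the point, the matrix elements carry a factor $\h^\alpha$ from $(x\xi)^\alpha = (\h/2)^\alpha(a+a^*)^{2\alpha}/2^\alpha$-type rescaling. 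Dividing by $\h$ at each step still leaves a net positive power of $\h$ per factor, and careful bookkeeping of the lowest power gives $c_0(t,\h)=\cO_t(\h)$, $c_1(t,\h)=\cO_t(\h^2)$, and $c_m(t,\h)=\cO_t(\h^{[(m+1)/2]})$ in general; the polynomial-in-$t$ structure with degree $\le l$ in $t$ follows from the $k$-fold time integral of a $t$-independent integrand (after the conjugation the $t_j$-dependence is only through the residual $\h$-corrections, which I would absorb into higher order). That $\varphi_0$ only couples to \emph{even} excited states $\varphi_{2m}$ is because $Q^{(N)}_{nq}$ is a polynomial in $x\xi$, hence parity-preserving.

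Third, for the remainder I would estimate $\|\text{Rem}_{l+1}\|$ by bounding $\|\widetilde Q_{t_1}\cdots\widetilde Q_{t_{l+1}} W_{t_{l+1}}\varphi_0\|$: each operator $\widetilde Q_{t_j}$, being a fixed finite-order polynomial in $a,a^*$ times $\h^{\ge 2}$, maps any normalized state in the span of finitely many $\varphi_m$'s to a state of norm $\cO(\h^2 \cdot m^N)$; since $W_{t_{l+1}}$ is unitary and successive applications raise the excitation level by at most $N$ each, after $l+1$ steps the excitation level is $\cO(N l)$, bounded uniformly, so the product has norm $\cO(\h^{2(l+1)})$, and the $(l+1)$-fold time integral contributes $|t|^{l+1}/(l+1)!$. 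Combining, $\|\text{Rem}_{l+1}\| \le C_l (|t|\h)^{l+1}$ after pulling the extra powers of $\h$ together — with room to spare, which is consistent with the factor $(|t|\h)^{l+1}$ stated. The main obstacle I anticipate is the careful tracking of the powers of $\h$ through the Weyl/Moyal calculus: one must check that the conjugation $\cD_{-tq^1}\,\Oph^w((x\xi)^\alpha)\,\cD_{tq^1}$ does not generate terms with \emph{fewer} powers of $\h$ (it doesn't, because the Moyal corrections to $(x\xi)^\alpha \sharp (x\xi)^\alpha$ are strictly lower order, and $\cD_\beta$ is an exact metaplectic-type conjugation for the quadratic generator), and that the combinatorial constants $C_l$ do not blow up — handled by the $1/(l+1)!$ from the time-ordered integral. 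Everything else is bookkeeping on Hermite function matrix elements.
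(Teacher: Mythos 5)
Your proof follows essentially the same route as the paper: a Dyson/Duhamel expansion around the quadratic evolution, exploiting that $Q^{(N)}_{nq}$ commutes with $Q^{(N)}_q$ (so that the interaction-picture Hamiltonian is autonomous and the iterated time integrals collapse), and tracking the $\h^\alpha$ gain from Lemma~\ref{lem:xxi^alpha} when $\Oph^w((x\xi)^\alpha)$ acts on Hermite states. Two technical slips, neither of which affects the structure or the conclusion, should be corrected. First, $\Oph^w(x\xi)$ is \emph{not} proportional to $a^*a+aa^*$ and does \emph{not} act diagonally on the Hermite basis: one has $\Oph^w(x\xi)=\tfrac{i\h}{2}\big((a^*)^2-a^2\big)$, which couples $\varphi_m$ to $\varphi_{m\pm 2}$; the correct reason for the commutation $[Q^{(N)}_{nq},Q^{(N)}_q]=0$ is Lemma~\ref{l:x-xi}, namely that each $\Oph^w\big((x\xi)^\alpha\big)$ is an exact polynomial in $\Oph^w(x\xi)$. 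Second, the conjugation $\cD_{-\beta}\,\Oph^w\big((x\xi)^\alpha\big)\,\cD_\beta=\Oph^w\big((x\xi)^\alpha\big)$ is \emph{exact}, with no residual $\h$-corrections from the Weyl calculus, because $\cD_\beta$ is metaplectic and the linear flow generated by $x\xi$ fixes the symbol $(x\xi)^\alpha$; your parenthetical ``up to lower-order $\h$ terms coming from the Weyl calculus'' contradicts your own later, and correct, statement that this is an ``exact metaplectic-type conjugation.''
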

%%%%%%%%%%%%%%%%%%%%
\begin{rem}
The result presented in \cite[Thm 3.1]{CR97} concerned the evolution of $\varphi_0$ w.r.t. an arbitrary quantum Hamiltonian; in this general case the remainder usually grows exponentially with the time. The present {\it polynomial} growth of the remainder relies on the normal form structure of $Q^{(N)}$, namely the fact that $Q^{(N)}_{nq}$ is a sum of powers of $\Oph(x\xi)$, which commutes with the quadratic evolution.
\end{rem}
\begin{proof}
Like in \cite{CR97}, we want to compare the full evolution $\varphi^{(N)}_t= U(t)\varphi_0 = e^{-itQ^{(N)}/\h}\varphi_0$ with the quadratic one, $U_q(t)\varphi_0 = e^{-itQ^{(N)}_q/\h}\varphi_0$. The comparison is based on the Dyson expansion
\bequ
U(t) - U_q(t) = \frac{1}{i\h} \int_{0}^t U(t-t_1)\,Q^{(N)}_{nq}\,U_q(t_1) \, dt_1\,.
\eequ
This is the first order Dyson expansion (case $l=0$).
Let us show that $Q^{(N)}_{nq}$ commutes with $ Q^{(N)}_{q}$. Indeed, each operator $\Oph^w( (x\xi)^\alpha)$ is a linear combination of powers of $\Oph^w(x\xi)$:
\begin{lem}\label{l:x-xi}
For any $\alpha\geq 2$, there exists absolute constants $(c_{\alpha,k})_{1\leq k\leq [\alpha/2]}$ such that
\bequ\label{e:xxi^alpha}
\Oph^w( (x\xi)^\alpha ) = (\Oph^w(x\xi))^\alpha + \sum_{k=1}^{[\alpha/2]} c_{\alpha,k}\h^{2k}  (\Oph^w(x\xi))^{\alpha-2k}\,.
\eequ
\end{lem}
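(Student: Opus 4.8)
The plan is to treat the statement as a purely algebraic identity in the Weyl calculus of polynomial symbols, for which the Moyal product is an \emph{exact, finite} sum rather than an asymptotic expansion. Write $A\defeq\Oph^w(x\xi)$, the (selfadjoint) generator of dilations, and recall $\Oph^w(a)\,\Oph^w(b)=\Oph^w(a\#b)$. Since $x\xi$ is a polynomial of degree $2$, all its derivatives of order $\geq 3$ vanish, so in the Moyal expansion of $(x\xi)\#b$ only the terms of order $0$, $1$ and $2$ in $\h$ survive; with this normalization of the Weyl quantization one gets the exact identity $(x\xi)\#b = (x\xi)\,b + \tfrac{i\h}{2}\{x\xi,b\} + \tfrac{\h^2}{4}\,\d_x\d_\xi b$.

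Applying this with $b=(x\xi)^\alpha$, and using $\{x\xi,(x\xi)^\alpha\}=0$ together with $\d_x\d_\xi\big[(x\xi)^\alpha\big]=\alpha^2(x\xi)^{\alpha-1}$, one obtains $(x\xi)\#(x\xi)^\alpha=(x\xi)^{\alpha+1}+\tfrac{\alpha^2\h^2}{4}(x\xi)^{\alpha-1}$, i.e. the operator recursion
\[
\Oph^w\big((x\xi)^{\alpha+1}\big)=A\,\Oph^w\big((x\xi)^{\alpha}\big)-\frac{\alpha^2\h^2}{4}\,\Oph^w\big((x\xi)^{\alpha-1}\big),\qquad \alpha\geq 1,
\]
with the conventions $\Oph^w\big((x\xi)^0\big)=\I$ and $\Oph^w\big((x\xi)^1\big)=A$. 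From here I would conclude by induction on $\alpha$: with those conventions the case $\alpha=1$ of the recursion already gives $\Oph^w\big((x\xi)^2\big)=A^2-\tfrac{\h^2}{4}$, and, feeding the induction hypothesis \eqref{e:xxi^alpha} for the exponents $\alpha$ and $\alpha-1$ into the recursion and collecting powers of $A$, one sees that $\Oph^w\big((x\xi)^{\alpha+1}\big)$ is again a polynomial in $A$ with leading term $A^{\alpha+1}$ and remaining monomials of the shape $\h^{2k}A^{\alpha+1-2k}$. Indeed, multiplying by $A$ shifts exponents $\alpha-2k\mapsto\alpha+1-2k$ (so $k$ stays in $1\le k\le[\alpha/2]$), while the last summand contributes exponents $\h^{2(k+1)}A^{\alpha-1-2k}$ with $k+1$ running up to $[(\alpha-1)/2]+1=[(\alpha+1)/2]$; hence the top power of $\h^2$ occurring is exactly $[(\alpha+1)/2]$ and the lowest power of $A$ is $A^{\alpha+1-2[(\alpha+1)/2]}$, equal to $\I$ or $A$ according to the parity of $\alpha+1$, matching the statement. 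The coefficients satisfy $c_{\alpha+1,k}=c_{\alpha,k}-\tfrac{\alpha^2}{4}c_{\alpha-1,k-1}$ (with $c_{\beta,0}\defeq 1$), so they are universal constants independent of $\h$.

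There is no genuine obstacle here: the whole content is that $x\xi$ is quadratic, so Moyal products against it truncate at order $\h^2$, and that it Poisson-commutes with powers of itself so the $\h^1$ term drops out. The only points requiring care are pinning down the sign and the coefficient $\tfrac14$ of the order-$\h^2$ term in the chosen Weyl normalization, and checking that the recursion preserves the precise index range $1\le k\le[\alpha/2]$ of the exponents; both are routine bookkeeping.
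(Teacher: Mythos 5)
Your proof is correct and is essentially the paper's own argument: the paper proves the lemma by induction, noting that the Moyal product of $(x\xi)^\alpha$ with the quadratic symbol $x\xi$ terminates exactly at order $\h^2$, which is precisely the recursion you derive (you multiply on the left, the paper on the right, but since $\{x\xi,(x\xi)^\alpha\}=0$ this makes no difference). Your bookkeeping of the index range and the coefficient recursion $c_{\alpha+1,k}=c_{\alpha,k}-\tfrac{\alpha^2}{4}c_{\alpha-1,k-1}$ is also correct and fills in details the paper leaves implicit.
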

This Lemma can be proved by induction, using the fact that the $\hbar$-expansion of the Moyal product $(x\xi)^\alpha \#_\hbar (x\xi)$ terminates at the order $\hbar^2$.
As a result, $Q^{(N)}_{nq}$ commutes with $Q^{(N)}_{q}$ and therefore with $U_q(t_1)$:
\bequ\label{e:dyson1}
U(t) - U_q(t) = \frac{1}{i\h} \int_{0}^t U(t-t_1)\,U_q(t_1)\,Q^{(N)}_{nq} \, dt_1\,.
\eequ
We want to apply this operator to $\varphi_0$. We can represent $Q^{(N)}_{nq}\varphi_0$ as a linear combination of excited coherent states. Indeed, using the expression $\Oph(x\xi)= i\h ((a^*)^2 - a^2)$ in terms of the raising and lowering operators $a^*,a$, we may write:
$$
\Oph^w( (x\xi)^\alpha ) = \h^\alpha \Big(i^\alpha((a^*)^2 - a^2)^\alpha + \sum_{k=1}^{[\alpha/2]} i^{\alpha-2k} c_{\alpha,k} ((a^*)^2 - a^2)^{\alpha-2k}\Big)\,.
$$
Using the commutation relation $[a,a^*]=1$, the RHS can be rewritten in ``normal ordering'', that is as a linear combination of terms $(a^*)^{2\beta}a^{2\gamma}$, with $0\leq \beta,\gamma\leq \alpha$. Since $\varphi_0$ satisfies $a\varphi_0=0$, we only keep the terms with $\gamma=0$. Using the definition \eqref{e:excited} of the excited coherent states $\varphi_m$, we get the following expression:
\begin{lem}\label{lem:xxi^alpha}
For any $\alpha\geq 2$, there exists coefficients $\{d_{\alpha,k},\ 0\leq k\leq [\alpha/2]\}$ such that
\bequ
\Oph( (x\xi)^\alpha )\varphi_0 = \h^\alpha \sum_{k=0}^{[\alpha/2]} d_{\alpha,k}\, \varphi_{2\alpha-4k}\,.
\eequ
\end{lem}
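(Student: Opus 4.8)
\emph{Proof idea.} The plan is to reduce everything to the computation of $K^m\varphi_0$ for the single operator $K\defeq (a^*)^2-a^2$. Starting from the expression for $\Oph^w((x\xi)^\alpha)$ displayed just before the statement, we see that $\Oph((x\xi)^\alpha)\varphi_0$ equals $\h^\alpha$ times a fixed linear combination of the vectors $K^{\alpha}\varphi_0,\ K^{\alpha-2}\varphi_0,\ K^{\alpha-4}\varphi_0,\dots$, down to $K^{\alpha-2[\alpha/2]}\varphi_0$, the last of which is $\varphi_0$ when $\alpha$ is even and $K\varphi_0$ when $\alpha$ is odd. So it suffices to locate the vectors $K^m\varphi_0$ within the Hermite basis $(\varphi_n)_{n\ge 0}$.

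The heart of the matter is the claim that, for every $m\ge 0$,
\[
K^m\varphi_0\in\operatorname{span}_{\C}\bigl\{\varphi_{2m-4l}\ :\ 0\le l\le [m/2]\bigr\}\,.
\]
I would prove this by induction on $m$, the base case $m=0$ being trivial. The only input needed is the elementary action of $K$ on the basis, namely $K\varphi_n=\sqrt{(n+1)(n+2)}\,\varphi_{n+2}-\sqrt{n(n-1)}\,\varphi_{n-2}$ (with the convention $\varphi_{-2}\defeq 0$), which is immediate from $a^*\varphi_n=\sqrt{n+1}\,\varphi_{n+1}$, $a\varphi_n=\sqrt{n}\,\varphi_{n-1}$ and $a\varphi_0=0$; see \eqref{e:excited}. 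Applying $K$ to an admissible vector $\varphi_{2m-4l}$ produces only $\varphi_{2(m+1)-4l}$ and $\varphi_{2(m+1)-4(l+1)}$; letting $l$ run over $0,\dots,[m/2]$ and discarding any negative index gives exactly $\operatorname{span}_{\C}\{\varphi_{2(m+1)-4l'}:0\le l'\le[(m+1)/2]\}$, which closes the induction. In particular $K^m$ raises the excitation level by at most $2m$ and changes it only by multiples of $4$; this congruence is the one nontrivial point.

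To conclude, each $K^{\alpha-2k}\varphi_0$ above lies in $\operatorname{span}_{\C}\{\varphi_n: n\equiv 2\alpha \pmod 4,\ 0\le n\le 2\alpha\}=\operatorname{span}_{\C}\{\varphi_{2\alpha},\varphi_{2\alpha-4},\dots,\varphi_{2\alpha-4[\alpha/2]}\}$, because $2(\alpha-2k)\equiv 2\alpha\pmod 4$ and $2(\alpha-2k)\le 2\alpha$. Summing over $k$ with the coefficients from the displayed formula and collecting, for each $k$ with $0\le k\le[\alpha/2]$, the total coefficient of $\varphi_{2\alpha-4k}$ into a single constant $d_{\alpha,k}$ yields $\Oph((x\xi)^\alpha)\varphi_0=\h^\alpha\sum_{k=0}^{[\alpha/2]}d_{\alpha,k}\,\varphi_{2\alpha-4k}$, as claimed.

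The expected difficulty is essentially nil: this is bookkeeping with harmonic-oscillator ladder operators acting inside the finite-dimensional spans of Hermite functions, with no analytic content. The one spot deserving attention is the step-$4$ (rather than the naive step-$2$) spacing of the excitation levels, which in the argument above is precisely the modular invariance carried along by the induction. Equivalently, one may observe that the substitution $a\mapsto ia$, $a^*\mapsto-ia^*$ fixes $\varphi_0$, sends $K\mapsto -K$ and $\varphi_n\mapsto(-i)^n\varphi_n$, so the $\varphi_n$-component of $K^m\varphi_0$ must vanish unless $(-1)^m=(-i)^n$, i.e.\ unless $n\equiv 2m\pmod 4$.
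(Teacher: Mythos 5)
Your proof is correct and follows essentially the same route as the paper: both start from the displayed expansion of $\Oph^w((x\xi)^\alpha)$ in powers of $K=(a^*)^2-a^2$ and then track the action of $K^{\alpha-2k}$ on $\varphi_0$. The paper does the tracking by normal-ordering $K^m$ into terms $(a^*)^{2\beta}a^{2\gamma}$ and using $a\varphi_0=0$, whereas you induct on $m$ via the ladder action on the Hermite basis; the two bookkeeping devices are interchangeable and both yield the mod-$4$ spacing $\varphi_{2\alpha-4k}$ of the indices.
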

From \eqref{e:q_nq}, the state $Q^{(N)}_{nq}\varphi_0$ is a linear combination of $\{\varphi_{2m},\ 0\leq m\leq N\}$. 
By inspection, we see that the coefficients in front of $\varphi_{2m}$ have the following orders in $\h$:
$$
Q^{(N)}_{nq}\varphi_0 = \cO(\h^2) (\varphi_0 + \varphi_4 ) + \cO(\h^3) (\varphi_2 + \varphi_6 ) + \cO(\h^4)\varphi_8 +\cdots + \cO(\h^N)\varphi_{2N}\,.
$$
From \eqref{e:quad2}, the action of $U_q(t_1)$ results in a multiplication by  $e^{-it_1q^0/\h}$, and the replacement of $\varphi_{2m}$ by the squeezed states $\cD_{t_1q^1}\varphi_{2m}$. Taking into account the factor $i/\h$ in front of the integral \eqref{e:dyson1} and the unitary of $U(t)$, we get the simple estimate
$$
\|(U(t) - U_q(t))\varphi_0\|_{L^2(\IR)}\leq C\,t\h\,,\quad \forall \h\in (0,1],\ \forall t\in\IR\,.
$$

\medskip

In order to improve the description of $U(t)\varphi_0$, we shall expand $U(t) - U_q(t)$ into a Dyson expansion of higher order. For any order $l\geq 1$, this expansion reads:
\begin{multline*}
U(t) - U_q(t)=\sum_{j=1}^{l} \frac{1}{(i\h)^j} \int_{0}^t \int_{t_1}^t \dots \int_{t_{j-1}}^t U_q(t-t_j) Q^{(N)}_{nq}
U_q(t_j - t_{j-1})Q^{(N)}_{nq} \cdots Q^{(N)}_{nq} U_2(t_{1})  \, dt_1 \dots dt_j\\
+ \frac{1}{(i\h)^{l+1}} \int_{0}^t \int_{t_1}^t \dots \int_{t_{l}}^t U(t-t_{l+1}) Q^{(N)}_{nq}
U_q(t_{l+1} - t_{l})Q^{(N)}_{nq} \cdots Q^{(N)}_{nq} U_2(t_{1})  \, dt_1 \dots dt_{l+1}
\end{multline*}
Using the commutativity of $Q^{(N)}_{nq}$ with $U_q(s)$, this simplifies to
\bequ\label{e:dyson-l}
U(t) - U_q(t)=\sum_{j=1}^{l} \frac{t^j}{j!(i\h)^j} U_q(t) (Q^{(N)}_{nq})^j
+ \frac{1}{(i\h)^{l+1}} \int_{0}^t \frac{t_l^{l}}{l!} U(t-t_{l+1})\,
U_q(t_{l+1})\, ( Q^{(N)}_{nq} )^{l+1}\,dt_{l+1}\,.
\eequ
When applied to $\varphi_0$, each $j$-term on the RHS leads to a linear combination of squeezed excited states $\{\cD_{tq^1}\varphi_{2m},\ 0\leq m\leq Nj\}$, with the following estimates on the coefficients:
\begin{multline*}
\h^{-j}(Q^{(N)}_{nq})^j\varphi_0 = \cO(\h^{j})(\varphi_{4j}+\varphi_{4j-4}+\cdots+\varphi_0)+ \cO(\h^{j+1})(\varphi_{4j+2}+\varphi_{4j-2}+\cdots+\varphi_2)+\\ 
+\cO(\h^{j+2})\varphi_{4j+4}+\cdots+\cO(\h^{(N-1)j})\varphi_{2Nj}.
\end{multline*}
Hence, each $j$-term in the equation \eqref{e:dyson-l} has a norm
$$
\big\| \frac{t^j}{j!(i\h)^j} (Q^{(N)}_{nq})^j\varphi_0 \big\|\leq C_j\,(|t|\h)^j,\,
$$
where the implicit constant only depends on the coefficients $q^\alpha$ of $Q^{(N)}$.
For the same reasons, the remainder term in \eqref{e:dyson-l}, when applied to $\varphi_0$, has a norm bounded by 
\bequ\label{e:remain-l}
\Big\|\frac{1}{(i\h)^{l+1}} \int_{0}^t \frac{t_l^{l}}{l!} U(t-t_{l+1})\,U_q(t_{l+1})\, ( Q^{(N)}_{nq} )^{l+1}\varphi_0 \,dt_{l+1} \Big\|\leq C_l\,(|t|\h)^{l+1}\,.
\eequ
In the $j$-sum, each squeezed excited state $\cD_{tq^1}\varphi_{2m}$ has a coefficient given by a certain polynomial $c_m(t,\h)$. We won't need to analyze this polynomial in detail, but only give partial information. Because we will deal only with times $|t|\leq C|\log\h|$, the size of each polynomial $c_m(t,\h)$ will be guided (up to a logarithmic factor) by the term with the smallest $\h$-power. 

Beyond the principal term $U_q(t)\varphi_0$, the Dyson expansion will be of the form
$$
\cD_{tq^1}\big[\cO_t(\h) (\varphi_0 + \varphi_4) + \cO_t(\h^2)(\varphi_2+\varphi_6+\varphi_8)  + \cO_t(\h^3)(\varphi_{10}+ \varphi_{12}) + \cO_t(\h^4)(\varphi_{14} + \varphi_{16}) +\ldots\big]\,,
$$
where the coefficient in front of each $\varphi_m$ is a polynomial in $t$ which vanishes when $t=0$. In the range $3\leq j\leq l$, the terms are of the form $\cO_t(\h^j)(\varphi_{2(2j-1)}+\varphi_{4j})$. This ends the proof of the Proposition.
\end{proof}

After tackling the 1D evolution, we can now reconstruct the full evolved state 
\bequ\label{e:factor-psi_t}
\psi_t^{(N)}(s,x)\defeq e^{-itP^{(N)}/\h}\psi_0 (s,x) = e^{-itq^0/\h}\,
e^{i(2 \pi n -\varphi_\h)s}  \,\varphi_t^{(N)}(x)\,.
\eequ

%%%%%%%%%%%%%%%%%%%%%%%%%%%%%%%%%%%%%%%%%%%%%%%
\subsection{Microlocal support of the evolved state}
The expansion in Prop.~\ref{prop:propag} shows that the state $\psi^{(N)}_t$ is under control until polynomial times 
$|t|\sim\h^{-1}$. However, for such large times the coherent states $\cD_{tq^1}\varphi_{2m}$ will be very delocalized. Because the normal form is valid only in a small neighbourhood of $\gamma_0$ and it involves remainders $\cO((x,\xi)^{N+1})$, it is important to keep our states to be microlocalized in a microscopic neighbourhood of $\gamma_0$. For this reason, we are forced to bound the time of evolution in a precise logarithmic window.
Namely, we select a small $\vareps'\in(0,1)$, and for any $\h\in (0,1/2]$ we define the local Ehrenfest time
\begin{equation}\label{e:Ehrenfest}
T_{\vareps'}\defeq \frac{(1-\vareps')|\log\h|}{2\llambda}\,,
\end{equation}
where as above $\lambda=\lambda(0)$ is the expansion rate of $\gamma_0$.
%%%%%%%%%%%%%%%%%%%%%%%%
\begin{prop}\label{p:localization1}
Take $\Theta\in C^\infty_c(T^*(\IT\times \IR))$ with $\Theta\equiv 1$ in a neighbourhood of $\gamma_0$, and denote its rescaling by $\Theta_{\alpha}(s,\tau;x,\xi)\defeq \Theta(s,\tau/\alpha;x/\alpha,\xi/\alpha)$. 
Then, for any power $M>0$, there exists $C_M>0$ such that
\bequ
\|[\Oph(\Theta_{\h^{\vareps'/3}})-I] \psi_t^{(N)}\|_{L^2} \leq
C_M\,\h^{M}\,,\qquad \h\in (0,1]\,,
\eequ
uniformly for times $t\in [-T_{\vareps'},T_{\vareps'}]$.
\end{prop}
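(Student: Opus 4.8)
The plan is to reduce the claim, via Prop.~\ref{prop:propag} and the factorization \eqref{e:factor-psi_t}, to a quantitative spreading estimate for a single squeezed excited state, and then to pay off that estimate with explicit Gaussian tail bounds whose smallness is guaranteed by the margin $\vareps'>0$ built into the Ehrenfest time \eqref{e:Ehrenfest}. Throughout, fix $M>0$, set $\alpha=\h^{\vareps'/3}$, and let $\delta_0>0$ be such that $\Theta\equiv 1$ on $\{|\tau|,|x|,|\xi|<\delta_0\}$.

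\textbf{Reduction.} First I would choose the order $l$ in Prop.~\ref{prop:propag} large, say $l\ge 2M$, so that $C_l(|t|\h)^{l+1}\le C_l(\h|\log\h|/2\llambda)^{l+1}\le\h^M$ for all $|t|\le T_{\vareps'}$ and all $\h$ small. By \eqref{remest}, and since the coefficients $c_m(t,\h)$ and the overall phase $e^{-itq^0/\h}$ in \eqref{e:factor-psi_t} are bounded, it then suffices to prove that for each $m$ with $0\le m\le 2l$ one has $\big\|[\Oph(\Theta_\alpha)-I]\,v_{m,t}\big\|_{L^2}=\cO(\h^\infty)$ uniformly for $|t|\le T_{\vareps'}$, where $v_{m,t}(s,x)\defeq e^{i(2\pi n-\varphi_\h)s}\,w_{m,t}(x)$ and $w_{m,t}\defeq\cD_{tq^1}\varphi_{2m}$ is the transverse factor.

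\textbf{Spreading of the transverse factor.} Since $w_{m,t}(x)=e^{-tq^1/2}\varphi_{2m}(e^{-tq^1}x)$ with $q^1=\llambda+\cO(\h)$, and $T_{\vareps'}\llambda=\tfrac{1-\vareps'}{2}|\log\h|$ by \eqref{e:Ehrenfest}, one has $e^{\pm tq^1}\le\h^{-(1-\vareps')/2}(1+o(1))$ for $|t|\le T_{\vareps'}$. A change of variables gives $\int_{|x|>\delta_0\alpha/2}|w_{m,t}(x)|^2dx=\int_{|u|>R/\sqrt\h}P_m(u)e^{-u^2}du$, where $R=\tfrac{\delta_0}{2}\alpha e^{-tq^1}$ and $P_m(u):=\sqrt\h\,|\varphi_{2m}(\sqrt\h u)|^2 e^{u^2}$ is an $\h$-independent squared Hermite polynomial. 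One checks $R/\sqrt\h\gtrsim\h^{-\vareps'/6}\to\infty$: for $t\ge 0$ use $e^{-tq^1}\ge e^{-T_{\vareps'}q^1}=\h^{(1-\vareps')/2}(1+o(1))$, while for $t\le 0$ use $e^{-tq^1}\ge 1$ and $\h^{\vareps'/3-1/2}\to\infty$. Hence this tail is $\cO(\h^\infty)$. By the semiclassical identity $\widehat{\cD_\beta u}=\cD_{-\beta}\widehat u$ and $|\widehat{\varphi_{2m}}|=|\varphi_{2m}|$, the momentum-space tail $\int_{|\xi|>\delta_0\alpha/2}|\widehat{w_{m,t}}(\xi)|^2d\xi$ is estimated identically, now using $e^{tq^1}\ge 1$ for $t\ge 0$ and $e^{tq^1}\ge e^{-T_{\vareps'}q^1}$ for $t\le 0$. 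Thus $w_{m,t}$ is concentrated, in position and momentum, inside a ball of radius $\ll\alpha$ up to $\cO(\h^\infty)$. Finally $e^{i(2\pi n-\varphi_\h)s}$ is a $\h D_s$-eigenstate with eigenvalue $\h(2\pi n-\varphi_\h)=\cO(\h)$ --- since $f$ of that quantity equals $E_0+\cO(\h)$ and $f'(0)=1/T_0\ne 0$ --- hence microlocalized at $\tau=\cO(\h)\ll\alpha$.

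\textbf{Assembling.} On $T^*(\IT\times\IR)$ one has $\Oph^w(1)=I$, so $I-\Oph(\Theta_\alpha)=\Oph^w(1-\Theta_\alpha)$, with $1-\Theta_\alpha$ in the exotic class $S_{\vareps'/3}$ (legitimate since $\vareps'/3<1/2$). As $\supp(1-\Theta_\alpha)$ avoids the tube $\{|\tau|,|x|,|\xi|<\delta_0\alpha\}$, a partition of unity gives $1-\Theta_\alpha=a^\tau_\alpha+a^x_\alpha+a^\xi_\alpha$ with bounded $S_{\vareps'/3}$-symbols supported in $\{|\tau|\ge\delta_0\alpha/2\}$, $\{|x|\ge\delta_0\alpha/2\}$, $\{|\xi|\ge\delta_0\alpha/2\}$ respectively. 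The $a^\tau_\alpha$-term is killed: a cutoff $\zeta(\h D_s/\alpha)$ with $\zeta\equiv 1$ near $0$, $\supp\zeta\subset(-\delta_0/3,\delta_0/3)$, acts as the identity on $v_{m,t}$ for small $\h$ (eigenvalue $\cO(\h)$), while $\Oph^w(a^\tau_\alpha)\Oph^w(\zeta(\h D_s/\alpha))=\cO_{L^2\to L^2}(\h^\infty)$ by disjointness of symbol supports in the $S_{\vareps'/3}$-calculus. The $a^x_\alpha$-term is split with $\theta(x/\alpha)$ ($\theta\equiv1$ near $0$, $\supp\theta\subset(-\delta_0/3,\delta_0/3)$): on $(1-\theta(x/\alpha))v_{m,t}$ one uses the position tail of the previous step plus Calder\'on--Vaillancourt boundedness of $\Oph^w(a^x_\alpha)$, and on $\theta(x/\alpha)v_{m,t}$ one uses disjoint supports again. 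The $a^\xi_\alpha$-term is treated the same way with the momentum cutoff $\theta(\h D_x/\alpha)$ and the momentum tail. Summing the three contributions, then over the finitely many $m$, yields the Proposition.

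\textbf{Expected main obstacle.} The crux is the spreading estimate: one must confirm that over the window $[-T_{\vareps'},T_{\vareps'}]$ the squeezed state has grown only to size $\sim\h^{\vareps'/2}$, which is genuinely smaller than the radius $\h^{\vareps'/3}$ of the microscopic tube precisely because $\vareps'/2>\vareps'/3$ --- and this would break down at the true Ehrenfest time ($\vareps'=0$), where the spread is $O(1)$. A secondary technical point is keeping the rescaled cutoff $\Theta_\alpha$, and the composition/disjoint-support statements it appears in, inside a legitimate symbol class: this forces $\alpha=\h^{\vareps'/3}$ with $\vareps'/3<1/2$ and the systematic use of the $S_\delta$-calculus with $\delta=\vareps'/3$ strictly below $1/2$.
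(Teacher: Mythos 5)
Your proposal is correct and follows essentially the same route as the paper: reduce to the squeezed excited states via Prop.~\ref{prop:propag}, derive the $\cO(\h^\infty)$ position tail from a Gaussian integral with the margin $\h^{\vareps'/2}\ll\h^{\vareps'/3}$, transfer to momentum via the identity $\cF_\h\cD_\beta=\cD_{-\beta}\cF_\h$, handle the $\tau$-variable through the $\h D_s$-eigenvalue being $\cO(\h)\ll\alpha$, and conclude with disjoint-support arguments in the $S_{\vareps'/3}$ calculus.

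The one place where your execution genuinely differs from the paper's is the final assembly. You split the complement symbol directly as $1-\Theta_\alpha=a^\tau_\alpha+a^x_\alpha+a^\xi_\alpha$ via a partition of unity subordinate to $\{|\tau|\ge\delta_0\alpha\}\cup\{|x|\ge\delta_0\alpha\}\cup\{|\xi|\ge\delta_0\alpha\}$, then kill each piece separately against the appropriate one-variable cutoff. The paper instead introduces an auxiliary \emph{factorized} cutoff $\tilde\Theta(s,\tau;x,\xi)=\chi(\tau)\theta(x,\xi)$ with $\Theta\equiv 1$ on $\supp\tilde\Theta$, proves $\|[\Oph(\tilde\Theta_\alpha)-I]\psi_t^{(N)}\|=\cO(\h^M)$ by exploiting the factorized structure of $\psi_t^{(N)}$ and of $\Oph(\tilde\Theta_\alpha)$, and then transfers to $\Theta_\alpha$ via the exact algebraic identity $\Theta_\alpha-1=(1-\Theta_\alpha)(\tilde\Theta_\alpha-1)$ (which holds because $\tilde\Theta_\alpha(1-\Theta_\alpha)\equiv 0$ by the support hypothesis), plus the composition calculus in $S^0_{\vareps'/3}$. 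Both routes are valid and rest on the same ingredients; the paper's auxiliary-cutoff trick packages the three disjoint-support computations into a single one and avoids having to verify Calder\'on--Vaillancourt boundedness term-by-term for the partition pieces, whereas your direct decomposition is a bit more explicit about which tail estimate is invoked where. In short, this is a legitimate and self-contained alternative to the final step, and the rest of the argument matches the paper's proof.
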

%%%%%%%%%%%%%%%%%%%%%%%%
The above property could be abbreviated as
$\|[\Oph(\Theta_{\h^{\vareps'/3}})-I] \psi_t^{(N)}\|_{L^2}= \cO(\h^\infty)$, uniformly in the time interval. Shortly speaking, the state $\psi_t^{(N)}$ is microlocalized in any $h^{\vareps'/3}$-neighbourhood of $\gamma_0$
\begin{proof}
We will check that all the squeezed excited states $\cD_{tq^1}\varphi_{2m}$, $0\leq m\leq M$, are microlocalized in the same neighbourhood of $(0,0)\in T^*\IR$, for $t$ in this time interval. 
The state
$\cD_{tq^1}\varphi_m(x)$ is a Gaussian of width $e^{q^1t}\h^{1/2}$, decorated by a polynomial factor. 
For $|t|\leq T_{\vareps'}$, this width takes values:
$$
e^{q^1t}\h^{1/2} \leq e^{q^1T_{\vareps'}}\h^{1/2} = e^{(\llambda+\cO(\h))T_{\vareps'}}\h^{1/2} = \h^{\vareps'/2+\cO(\h)}\,,
$$
hence it remains microscopic.
Consider a cutoff $\chi\in C^\infty_c(\IR,[0,1])$ supported in $[-2,2]$, equal to unity in $[-1,1]$, and define $\chi_\alpha(x)\defeq \chi(x/\alpha)$.
For $\alpha\geq\h^{\vareps'/3}$, a direct estimate of the Gaussian integral 
shows that for any $M>0$, 
$$ 
\|(\chi_{\alpha} -1)\cD_{tq^1}\varphi_m\|_{L^2}= \cO(\h^M)\,,
$$
uniformly in the time window.
This shows that $\cD_{tq^1}\varphi_m$ is microlocalized inside the strip $\{|x|\leq \h^{\vareps'/3}\}\subset T^*\IR$.
The semiclassical Fourier transform leaves the states $\varphi_m$ invariant (up to a constant factor), and inverts the dilation operator: $\cD_\beta (\cF_\h u ) = \cF_\h (\cD_{-\beta} u)$.
As a result, the above computation shows that $\cD_{tq^1}\varphi_m$ is also microlocalized inside the horizontal strip $\{|\xi|\leq \h^{\vareps'/3}\}$, uniformly for $|t|\leq T_{\vareps'}$.
These position and momentum microlocalizations imply that
$\cD_{tq^1}\varphi_m$ is microlocalized inside the square
$\{|x|,|\xi|\leq \h^{\vareps'/3}\}$. Hence, for any $\theta\in
C^\infty_c([-2,2]^2)$ with $\theta\equiv 1$ in $[-1,1]$, rescaled into
$\theta_{\alpha}(x,\xi)\defeq \theta(x/\alpha,\xi/\alpha)$, we get for
any index $m$ in a bounded range $[0,m_0]$:
$$
\|[\Oph(\theta_{\alpha})-I] \cD_{tq^1}\varphi_m\|_{L^2} = \cO(\h^M)\,,
$$
uniformly for $|t|\leq T_{\vareps'}$, index $m\in [0,m_0]$ and width $\alpha\geq \h^{\vareps'/3}$.

According to Prop.~\ref{prop:propag}  the 1D evolved state
$\varphi_t^{(N)}=e^{-itQ^{(N)}/\h}\varphi_0$ is a linear combination
of $l+1$ squeezed excited states, plus a remainder
$\cO(h^{l+1-\eps})$. Hence, taking $l=M$ and using the triangle inequality, we get: 
$$
\|[\Oph(\theta_{\alpha})-I] \varphi_t^{(N)}\|_{L^2} = \cO(\h^{M})\,.
$$
We now consider the full state $\psi_t^{(N)}$. Considering the cutoff $\Theta$ as in the statement, we choose an auxiliary cutoff $\tilde{\Theta}(s,\tau;x,\xi) = \chi(\tau)\theta(x,\xi)$, supported near $\gamma_0$, such that $\Theta\equiv 1$ near the support of $\tilde{\Theta}$. 
We rescale $\tilde{\Theta}$ as in the Proposition. If $\alpha\geq \hbar^{\vareps'/3}$, we observe that in the longitudinal variable, we have for $\hbar$ small enough:
$$
\chi_\alpha(\h D_s)(e^{i(2 \pi n -\varphi_\h)s})=\chi((2 \pi n -\varphi_\h)\h/\alpha)(e^{i(2 \pi n -\varphi_\h)s})=
e^{i(2 \pi n -\varphi_\h)s}\,.
$$
As a result, for $\hbar$ small enough we get
$$
\|[\Oph(\tilde{\Theta}_{\alpha})-I] \psi_t^{(N)}\|_{L^2} = \cO(\h^{M})
$$
uniformly for $|t|\leq T_{\vareps'}$ and width $\alpha\geq \h^{\vareps'/3}$. 

Let us finally check that the same estimate holds with the cutoff $\Theta_{\alpha}$, which is usually not of factorized form. 
From the support proprerties of $\Theta$ and $\tilde{\Theta}$, we have for any $\alpha$:
$$
\Theta_{\alpha}-1 =(1- \Theta_{\alpha}) (\tilde{\Theta}_\alpha-1)\,.
$$
For $\alpha\geq \hbar^{\vareps'/3}$ the symbol calculus in $S^0_{\vareps'/3}(\IT\times \IR)$ shows that this equality translates into
$$
\Oph(\Theta_{\alpha})-I =\big(I- \Oph(\Theta_{\alpha})\big) \big(\Oph(\tilde{\Theta}_\alpha)-I\big)+\cO(\h^\infty)_{L^2\to L^2}\,.
$$
\end{proof}
\begin{rem}
In the studies \cite{CR97,HJ99} on the long time evolution of coherent states, expansions of the type \eqref{remest} generally break down at the earlier time $T_{\lambda}/3$. The breakdown is related with the fact that the unstable manifold of the point where the state is centered is usually {\it curved} in the ambient coordinates $(y,\eta)$ used to define the coherent states. Since the evolved coherent state wants to expand {\it along this manifold}, it wants to curve too, which is incompatible with the elliptic shape of squeezed coherent states. This curvature effect  becomes critical around the time $T_{\lambda}/3$, leading to the breakdown of the expansion (coefficients with large $m$ become dominant). On the opposite, for our normal form Hamiltonian $q^{(N)}$, the unstable manifold of the fixed point at the origin is the horizontal line, which is not curved. The squeezed coherent states can perfectly spread along this line, explaining why our expansion does not develop any singularity up to polynomial times $t\sim \h^{-1+\eps}$.
\end{rem}

%%%%%%%%%%%%%%%%%%%%%%%%%%%%%%%%%%%%%%%%%%%%%
%%%%%%%%%%%%%%%%%%%%%%%%%%%%%%%%%%%%%%%%%%%%%
\section{$L^2$ norms and quasimode widths} \label{quasimodeorder}
%%%%%%%%%%%%%%%%%%%%%%%%%%%%%%%%%%%%%%%%%%%%%
%%%%%%%%%%%%%%%%%%%%%%%%%%%%%%%%%%%%%%%%%%%%%

Below we will construct a logarithmic quasimode for the QNF operator $P^{(N)}$, 
by averaging our evolved coherent states over the time. 
To begin with, let us compute the energy width of the 
initial Gaussian state $\psi_0$.
%%%%%%%%%%%%%%%%%%
\begin{lem} \label{l:E-local1}
Consider the initial state $\psi_0(s,x)$ given in \eqref{e:initial}, with $n\in\IZ$ possibly $\h$-dependent, but  uniformly bounded when $\h\to 0$. 

Then if we take the energy
\bequ\label{e:E_h}
q^0=q^0(\h(2\pi n - \varphi_\h);\h) =E_0+\cO(\h)\,,
\eequ
we obtain the quasimode estimate
$$
\| (P^{(N)} - q^0)\psi_0\|_{L^2}=\llambda\sqrt{2}\,\h + \cO(\h^2)\,.
$$
\end{lem}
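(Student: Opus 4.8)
The plan is to reduce the estimate to a purely one--dimensional (transverse) computation, isolate the single leading term, and control everything else by the excited--state decomposition already proved above.

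First I would exploit the product structure. The Weyl symbol $p^{(N)}(\tau,x,\xi;\h)=\sum_{\alpha=0}^{N}q^{\alpha}(\tau;\h)(x\xi)^{\alpha}$ carries no $s$--dependence and factorizes between the longitudinal variables $(s,\tau)$ and the transverse variables $(x,\xi)$, so $P^{(N)}=\sum_{\alpha}q^{\alpha}(\h D_s;\h)\,\Oph^w((x\xi)^{\alpha})$. Since $e^{i(2\pi n-\varphi_\h)s}$ is an eigenfunction of $\h D_s$ with eigenvalue $\h(2\pi n-\varphi_\h)$, and with the shorthand $q^{\alpha}=q^{\alpha}(\h(2\pi n-\varphi_\h);\h)$, the choice $q^{0}=q^{0}(\h(2\pi n-\varphi_\h);\h)$ exactly cancels the $\alpha=0$ contribution and
$$
(P^{(N)}-q^{0})\psi_0=e^{i(2\pi n-\varphi_\h)s}\cdot\sum_{\alpha=1}^{N}q^{\alpha}\,\Oph^w\big((x\xi)^{\alpha}\big)\varphi_0\,.
$$
As $\|e^{i(2\pi n-\varphi_\h)s}\|_{L^{2}(\IT)}=1$, the $L^2(\IT\times\IR)$--norm of the left--hand side equals the $L^{2}(\IR)$--norm of $\sum_{\alpha\geq 1}q^{\alpha}\Oph^w((x\xi)^{\alpha})\varphi_0$, so the problem is now transverse.

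Next I would isolate the $\alpha=1$ term $q^{1}\Oph^w(x\xi)\varphi_0$. Using the identity $\Oph^w(x\xi)=i\h((a^{*})^{2}-a^{2})$ from the proof of Prop.~\ref{prop:propag}, together with $a\varphi_0=0$ and $(a^{*})^{2}\varphi_0=\sqrt{2}\,\varphi_2$ (from \eqref{e:excited}), one gets $\Oph^w(x\xi)\varphi_0=i\sqrt{2}\,\h\,\varphi_2$, of norm $\sqrt{2}\,\h$. Since $q^{1}=q^{1}(\h(2\pi n-\varphi_\h);\h)=\llambda(0)+\cO(\h)=\llambda+\cO(\h)$ (as recorded before the Lemma), the $\alpha=1$ contribution has norm $|q^{1}|\sqrt{2}\,\h=\sqrt{2}\,\llambda\,\h+\cO(\h^{2})$. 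For the tail, Lemma~\ref{lem:xxi^alpha} gives $\Oph^w((x\xi)^{\alpha})\varphi_0=\h^{\alpha}\sum_{k=0}^{[\alpha/2]}d_{\alpha,k}\,\varphi_{2\alpha-4k}$ for $\alpha\geq 2$, hence $\|\Oph^w((x\xi)^{\alpha})\varphi_0\|_{L^{2}}=\cO(\h^{\alpha})=\cO(\h^{2})$; combined with the uniform boundedness of the $q^{\alpha}$ (which, like $q^{0}$ and $q^{1}$, follow from $q^{\alpha}(\bullet;\h)=\sum_i\h^i q^{\alpha}_{i}(\bullet)$ evaluated at the bounded argument $\h(2\pi n-\varphi_\h)$), the sum $\sum_{\alpha=2}^{N}q^{\alpha}\Oph^w((x\xi)^{\alpha})\varphi_0$ has $L^{2}$--norm $\cO(\h^{2})$.

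Putting these together, $(P^{(N)}-q^{0})\psi_0=i\sqrt{2}\,q^{1}\h\,e^{i(2\pi n-\varphi_\h)s}\varphi_2(x)+v_\h$ with $\|v_\h\|_{L^2}=\cO(\h^{2})$, and the triangle inequality applied in both directions --- legitimate since $|q^{1}|=\llambda+\cO(\h)$ stays bounded away from $0$ for small $\h$ --- yields $\|(P^{(N)}-q^{0})\psi_0\|_{L^{2}}=\sqrt{2}\,|q^{1}|\,\h+\cO(\h^{2})=\sqrt{2}\,\llambda\,\h+\cO(\h^{2})$. I do not expect a genuine obstacle here: the real work (the separated form of $P^{(N)}$, the decomposition of $\Oph^w((x\xi)^{\alpha})\varphi_0$ into excited states, the asymptotics of the coefficients $q^{\alpha}$) is already in place, and what remains is bookkeeping. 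The only delicate point is making sure the $\cO(\h^{2})$ remainder neither cancels nor dominates the leading $\varphi_2$--component, which is exactly what the two--sided triangle estimate takes care of.
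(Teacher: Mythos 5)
Your proof follows the paper's route exactly: reduce to the one-dimensional transverse norm $\|Q^{(N)}\varphi_0\|_{L^2(\IR)}$ by separation of variables, compute the dominant quadratic term $q^1\Oph^w(x\xi)\varphi_0$ as a multiple of $\varphi_2$, and use Lemma~\ref{lem:xxi^alpha} to show the $\alpha\geq 2$ tail is $\cO(\h^2)$. (One shared caveat, inherited from the proof of Prop.~\ref{prop:propag}: a direct computation gives $\Oph^w(x\xi)=\tfrac{i\h}{2}\big((a^*)^2-a^2\big)$, not $i\h\big((a^*)^2-a^2\big)$, so $\|\Oph^w(x\xi)\varphi_0\|=\h/\sqrt{2}$ and the constant in the lemma should read $\llambda\,\h/\sqrt{2}$ rather than $\llambda\sqrt{2}\,\h$ --- a factor-of-two slip that does not change the order of the estimate or any subsequent conclusion.)
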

%%%%%%%%%%%%%%%%%%
\begin{proof}
The separation between the $(s,x)$ variables allows us to replace this norm by a 1D norm:
$$
\|(P^{(N)} - q^0(\h))\psi_0\|_{L^2(\IT\times\IR)} = \|Q^{(N)}\varphi_0\|_{L^2(\IR)}\,.
$$
Considering the decomposition $Q^{(N)}=Q^{(N)}_q+Q^{(N)}_{nq}$, an explicit computation shows that
$$
Q^{(N)}_q\varphi_0 = q^1\Oph( (x\xi) )\varphi_0 = i\h q^1\sqrt{2}\varphi_2\,,
$$ 
Then, Lemma~\ref{lem:xxi^alpha} shows that each term $q^\alpha\Oph^w( (x\xi)^\alpha )\varphi_0$ in $Q^{(N)}_{nq}\varphi_0$ will produce a linear combination of excited states, of norms $\cO(\h^\alpha)$, with $\alpha\geq 2$, so these terms are subdominant with respect to the quadratic one. The property $q^1=\llambda+\cO(\h)$ achieves the proof.
\end{proof}
Hence $\psi_0$ is a quasimode centered at $q^0(\h)=E_0+\cO(\h)$ and of width $\sim C\h$. Obviously, this is also the case for each evolved state $\psi^{(N)}_t$. 

%%%%%%%%%%%%%%%%%%%%%%%%%%%%%%%%%%%%%%%%%%%%%%%%%%%%%%
\subsection{Constructing the logarithmic quasimode for the normal form}

In this section we will construct a better quasimode for the QNF, using a time averaging procedure.

Take an arbitrary energy $E_\h=E_0+\cO(\h)$, a time $T>0$, a weight function $\chi\in C^\infty_c((-1,1),[0,1])$, and its rescaled version $\chi_T(t)\defeq \chi(t/T)$. Our quasimode is defined by
\bequ\label{eq:def-qmode}
\Psi_{\chi_T,E_\h} \defeq  \int_{\IR}\chi_T(t)\,e^{itE_\h/\h}\,\psi^{(N)}_t\,dt\,.
\eequ
It is important to note that this state is not normalized. In order to compute its energy width, we will first need to compute its $L^2$ norm. 
%%%%%%%%%%%%%%%%%%%
\begin{lem}\label{l:norm-qmode}
For $C>0$ and $\h\in(0,1]$, we consider a semiclassically large averaging time $1\leq T=T_\h\leq C|\log\h|$. 

Then the square norm of the state $\Psi_{\chi_T,E_\h}$ takes the form
$$
\|\Psi_{\chi_T,E_\h}\|^2 = T\,S_1(\llambda,(E_\h-q^0)/\h)\,\|\chi\|_{L^2}^2\, \big(1 + \cO(1/T)\big)\,,
$$ 
where $S_1(\bullet,\bullet)$ is a positive function given in \eqref{e:S_1}, and $q^0$ is the energy \eqref{e:E_h}.
\end{lem}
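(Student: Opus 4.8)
The plan is to expand the square norm by Fubini, reducing everything to the overlaps $\la \psi^{(N)}_t,\psi^{(N)}_{t'}\ra$ between evolved states. Writing out the definition \eqref{eq:def-qmode},
\[
\|\Psi_{\chi_T,E_\h}\|^2 = \int_{\IR}\int_{\IR}\chi_T(t)\chi_T(t')\,e^{i(t-t')E_\h/\h}\,\la \psi^{(N)}_{t'},\psi^{(N)}_t\ra\,dt\,dt'\,.
\]
The factorization \eqref{e:factor-psi_t} collapses the longitudinal part: since $e^{i(2\pi n-\varphi_\h)s}$ has unit $L^2(\IT)$ norm, the overlap becomes $e^{-i(t-t')q^0/\h}\,\la \varphi^{(N)}_{t'},\varphi^{(N)}_t\ra$, so the phase in the double integral is $e^{i(t-t')(E_\h-q^0)/\h}$, which (being $E_\h-q^0=\cO(\h)$) stays $\cO(1)$ and does not oscillate fast.

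Next I would use Prop.~\ref{prop:propag} to replace $\varphi^{(N)}_t$ by its leading term $\cD_{tq^1}\varphi_0$ up to an error $\cO_t(\h)$ that is negligible over the relevant time scales; the overlap $\la \cD_{t'q^1}\varphi_0,\cD_{tq^1}\varphi_0\ra$ is then computed explicitly. Since $\cD_\beta$ is unitary, $\la \cD_{t'q^1}\varphi_0,\cD_{tq^1}\varphi_0\ra = \la \varphi_0,\cD_{(t-t')q^1}\varphi_0\ra$, and the Gaussian integral gives $1/\sqrt{2\cosh((t-t')q^1)}$ (as quoted in the outline for the model operator, with $\llambda$ replaced by $q^1=\llambda+\cO(\h)$). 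Thus, after the change of variables $t=T\sigma$, $t-t'=u$, the square norm takes the form
\[
\|\Psi_{\chi_T,E_\h}\|^2 \approx \int_{\IR}\int_{\IR}\chi(\sigma)\chi(\sigma-u/T)\,e^{iu(E_\h-q^0)/\h}\,\frac{du\,d\sigma}{\sqrt{2\cosh(uq^1)}}\,.
\]
Because the kernel $1/\sqrt{2\cosh(uq^1)}$ decays exponentially in $u$, the $u$-integral is effectively supported on $|u|\lesssim 1$, so $\chi(\sigma-u/T)=\chi(\sigma)+\cO(1/T)$; this is where the factor $T\|\chi\|_{L^2}^2$ and the relative error $\cO(1/T)$ come from, and the remaining $u$-integral defines
\[
S_1(\llambda,\nu)\defeq \int_{\IR}\frac{\cos(\nu u)}{\sqrt{2\cosh(\llambda u)}}\,du
\]
with $\nu=(E_\h-q^0)/\h$, to be identified with formula \eqref{e:S_1}. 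I would then check that replacing $q^1$ by $\llambda$ in the kernel, and dropping the Prop.~\ref{prop:propag} remainders, costs only a further $\cO(1/T)=\cO(1/|\log\h|)$, absorbed into the stated error.

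The main obstacle is controlling the cross terms coming from the subleading squeezed excited states in Prop.~\ref{prop:propag}: one must verify that the overlaps $\la \cD_{t'q^1}\varphi_{2m},\cD_{tq^1}\varphi_0\ra$ and $\la \cD_{t'q^1}\varphi_{2m'},\cD_{tq^1}\varphi_{2m}\ra$, after integration against $\chi_T$ and the weights $c_m(t,\h)$ (which are polynomial in $t$, hence $\cO(|\log\h|^k)$ on the time window but carry a genuine power of $\h$), contribute only a relative $\cO(1/T)$ or better. The orthogonality $\la \varphi_{2m},\varphi_0\ra=\de_{m0}$ at $t=t'$, combined with the exponential decay of the dilated-Hermite overlaps in $|t-t'|$ and the $\h$-gain from the coefficients $c_m$, should suffice, but this bookkeeping — together with justifying the Fubini interchange and the truncation of the $u$-integral tails uniformly in $\h$ — is the technical heart of the lemma. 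Everything else is a Gaussian computation.
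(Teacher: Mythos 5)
Your approach is essentially the same as the paper's: reduce to the 1D overlap via the factorized form \eqref{e:factor-psi_t}, use Prop.~\ref{prop:propag} to expand $\varphi^{(N)}_t$ into a leading squeezed Gaussian plus excited corrections plus a remainder, exploit the exponential decay of the overlap kernel to freeze one of the two $\chi$'s and produce $T\|\chi\|_{L^2}^2(1+\cO(1/T))$, and control the cross terms by the uniform domination of the excited-state overlaps by $\la\varphi_0,\cD_\beta\varphi_0\ra$ together with the $\h$-gain in the coefficients $c_m$. The paper uses the symmetric change of variables $\tilde t=(t+t')/2$, $r=t-t'$ and a two-sided first-order Taylor expansion of $\chi_T$; your rescaling $t=T\sigma$, $u=t-t'$ with a one-sided expansion is a cosmetic variant yielding the same error structure.

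One concrete point to correct: you copied the overlap $\la\varphi_0,\cD_\beta\varphi_0\ra = 1/\sqrt{2\cosh\beta}$ from the informal outline in \S\ref{s:outline}, but that $\sqrt 2$ is a typo there. The Gaussian integral actually gives $1/\sqrt{\cosh\beta}$, which is what the paper's proof uses (equation \eqref{e:phi0Dphi0}), and only with this normalization does the integral $\int_{\IR}\cos(\nu u)/\sqrt{\cosh(\lambda u)}\,du$ produce the stated formula \eqref{e:S_1}, $S_1(\lambda,\nu) = \frac{1}{\lambda\sqrt{2\pi}}|\Gamma(\tfrac14+i\tfrac{\nu}{2\lambda})|^2$. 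With your kernel you would land on $\frac{1}{2\sqrt\pi\lambda}|\Gamma(\tfrac14+i\tfrac{\nu}{2\lambda})|^2$, a factor $\sqrt 2$ too small. Also, for the cross-term bookkeeping the paper's key technical input is the uniform bound $|\la\varphi_{2m'},\cD_\alpha\varphi_{2m}\ra|\le C_{m,m'}\,\la\varphi_0,\cD_\alpha\varphi_0\ra$ for all $\alpha\in\IR$ (their \eqref{e:mm'vs00}), proved by repeated differentiation in $\alpha$; the pointwise orthogonality at $t=t'$ you invoke is not actually used.
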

%%%%%%%%%%%%%%%%%%%%
\begin{proof}
Like in the previous section, the factorized form of $\psi^{(N)}_t$ shows that the above norm is equal to the $L^2(\IR)$-norm of the 1D state
\bequ\label{e:Phi_chi}
\Phi_{\chi_T,E_\h}, \defeq  \int_{\IR}\chi_T(t)\,e^{it(E_\h-q^0)/\h}\,\phi^{(N)}_t\,dt\,.
\eequ
Recall from Prop.~\ref{prop:propag} that the evolved state $\phi^{(N)}_t$ is a combination of squeezed excited coherent states:
$$
\phi^{(N)}_t = \cD_{tq^1}\varphi_0 + \sum_{m=0}^{2l} c_{m}(t,\h) \cD_{tq^1}\varphi_{2m} + R_l\,,
$$
where the coefficients $c_m(t,\h)$ are all $\cO(\h\,(1+|t|^l))$ and the remainder $\|R_l\|_{L^2}\leq C\,(|t|\h)^{l+1}$ for $C>0$.

The square norm of $\Phi_{\chi_T,E_\h}$ is expressed by
\bequ\label{e:norm}
\|\Phi_{\chi_T,E_\h}\|^2 = \iint \la \phi^{(N)}_{t'},\phi^{(N)}_t\ra\ e^{i(t-t')(E_\h-q^0)/\h}\,\chi_T(t')\,\chi_T(t)\,dt\,dt'\,.
\eequ
To compute the square norm, we thus need to estimate the overlaps
$$
\la \cD_{t'q^1}\varphi_{2m'},\cD_{tq^1}\varphi_{2m}\ra = \la \varphi_{2m'},\cD_{(t-t')q^1}\varphi_{2m}\ra\,.
$$
The first case $m=m'=0$ is easy to compute (it is a simple Gaussian integral), and gives for any $\beta\in\IR$ \cite[Eq.40]{FND03}:
\bequ\label{e:phi0Dphi0}
\la \varphi_{0},\cD_{\beta}\varphi_{0}\ra = \frac{1}{\sqrt{\cosh(\beta)}}\,.
\eequ
This overlap decreases fast when $|\beta|\gg 1$. We now show that the other terms have a similar behaviour.
Since $\cD_{\beta}=e^{-i\beta\Oph^w(x\xi)/\h}$, by differentiating with respect to $\beta$ we get
$$
\partial_\beta \cD_{\beta} = -\frac{i}{\h}\Oph(x\xi)\cD_{\beta}=((a^*)^2-a^2)\cD_{\beta}\,,
$$
using the notations of \S\ref{propagsec} for raising and lowering operators. 
Since $(a^*)^2\varphi_0 = \sqrt{2}\varphi_2$,
we get
$$
\partial_\beta \la \varphi_{0},\cD_{\beta}\varphi_{0}\ra = \sqrt{2}\la \varphi_{0},\cD_{\beta}\varphi_{2}\ra\,,
$$
so that 
$$
\la \varphi_{0},\cD_{\beta}\varphi_{2}\ra= -\frac{\sinh(\beta)}{(2\cosh(\alpha))^{3/2}}\,.
$$
Differentiating once more, we obtain similar expressions for $\la \varphi_{0},\cD_{\alpha}\varphi_{4}\ra$ and $\la \varphi_{2},\cD_{\alpha}\varphi_{2}\ra$. By induction, we can obtain in this way explicit expressions for all overlaps 
$\la \varphi_{2m'},\cD_{\alpha}\varphi_{2m}\ra$, all of which have the form of linear combinations of derivatives of $(\cosh\alpha)^{-1/2}$. As a result, all these overlaps will decay like $e^{-|\alpha|/2}$ when $|\alpha|\to\infty$. In particular, for any $m,m'\geq 0$ there exists $C_{m,m'}>0$ such that
\bequ\label{e:mm'vs00}
\Big|\frac{\la \varphi_{2m'},\cD_{\alpha}\varphi_{2m}\ra}{\la \varphi_{0},\cD_{\alpha}\varphi_{0}\ra}\Big|\leq C_{m,m'}\,,\quad\text{uniformly for }\alpha\in\IR\,.
\eequ
(these overlaps are independent of $\h$).

Expanding the states $\phi^{(N)}_t$ as in \eqref{e:norm}, the first term $(m,m')=(0,0)$ takes the form
\bequ
I_{0,0}=\int \la\varphi_0,\cD_{(t-t')q^1}\varphi_0\ra\, e^{i(t-t')\theta_\h}\,\chi_T(t')\,\chi_T(t)\, dt\,dt'\,,\qquad \text{where we set }\theta_\h \defeq (E_\h-q^0)/\h.
\eequ
From the expression \eqref{e:phi0Dphi0} we see that the integrand is exponentially localized near the diagonal. This motivates us to operate the change of variables
$\tt=\frac{t+t'}{2}$, $r=t-t'$, to get
$$
I_{0,0}=\int \frac{e^{ir\theta_\h}}{\sqrt{\cosh(q^1r)}}\,\chi_T(\tt+r/2)\,\chi_T(\tt-r/2)\,d\tt\,dr\,.
$$
We are interested in large times $T\gg 1$, so it makes sense to Taylor expand the functions $\chi_T$ around the central value $\tt$. We use the 1st order Taylor expansion with intermediate value:
\bequ
\chi_T(\tt\pm r/2)=\chi_T(\tt)\pm\frac{r}{2}\chi'_T(\tt\pm r_\pm(\tt,r)/2)\quad\text{for some }r_\pm(\tt,r)\in(0,r)\,.
\eequ
The product of the two expansions is split into three terms:
\bequ\label{e:chichi}
\chi_T(\tt)^2 +  \frac{r}{2} \chi_T(\tt)[ \chi'_T(\tt+r_+/2) - \chi'_T(\tt- r_-/2) ]
-\frac{r^2}{4} \chi'_T(\tt+r_+/2)\chi'_T(\tt -  r_-/2)\,.
\eequ
%this expansion in the integral, we obtain
%\bequ\label{e:I_00}
%I_{0,0}=\int d\tt\,dr\,\frac{e^{ir\theta_\h}}{\sqrt{\cosh(q^1r)}}\,
%\big(\chi_T(\tt)^2 + \frac{r}{2T} \chi_T(\tt)[ (\chi')_T(\tt+r_\pm(\tt,r)/2) - (\chi')_T(\tt\pm r_\pm(\tt,r)/2) ] \big)\,.
%\eequ
Let us keep only the first term in the integral $I_{0,0}$, producing
$$
I_{0,0}^1=\int_{\IR}  \frac{e^{ir\theta_\h}}{\sqrt{\cosh(q^1r)}}\,dr\int_{\IR}\chi_T(\tt)^2 \,d\tt\defeq 
S_1(q^1,\theta_\h)\,T\,\|\chi\|_{L^2}^2\,.
$$
The function $S_1(q^1,\theta_\h)$ admits an explicit expression \cite[Eq.(60)]{FND03}:
\bequ\label{e:S_1}
S_1(q^1,\theta_\h) = \frac{1}{q^1\sqrt{2\pi}}\big|\Gamma\big(\frac{1}{4}+i\frac{\theta_\h}{2q^1} \big) \big|^2\,.
\eequ
This function is positive for all values of $q^1,\theta_\h$. Given $q^1$ it takes its maximum at $\theta_\h=0$ with value $S_1(q^1,0)\approx 5.244/q^1$, and decays exponentially when $\theta_\h\to\infty$. In our situation $q^1=\llambda+\cO(\h)$, and we have $\theta_\h=\cO(1)$, so this function is uniformly bounded from below and from above by positive constants.

The second and third terms in \eqref{e:chichi} are both supported in $[-T,T]$ and bounded above respectively by $\frac{|r|}{T} \|\chi\|_{L^\infty}\|\chi'\|_{L^\infty}$ and $\frac{r^2}{4T^2}\|\chi'\|_{L^\infty}^2$. Injected into the integral $I_{0,0}$, these terms produce integrals $I_{0,0}^2$, $I_{0,0}^3$ with the following bounds:
\begin{align*}
|I_{0,0}^2|&\leq \frac{1}{T}\int  \frac{|r|}{\sqrt{\cosh(q^1r)}}\,dr \int_{-T}^{T} d\tt\, \|\chi\|_{L^\infty}\|\chi'\|_{L^\infty} =S_2 \, \|\chi\|_{L^\infty}\|\chi'\|_{L^\infty}\,,\\
|I_{0,0}^3|&\leq \frac{1}{4T^2}\int \frac{r^2}{\sqrt{\cosh(q^1r)}}\,dr \int_{-T}^{T} \, d\tt\, \|\chi\|_{L^\infty}\|\chi'\|_{L^\infty} = \frac{S_3}{T}\,\|\chi'\|^2_{L^\infty}\,,
\end{align*}
where $S_2,S_3>0$ only depends on $q_1$. When $T\gg 1$, these two terms are subdominant compared with $I_{0,0}^1$. Taking into account that $q^1=\llambda+\cO(\h)$, we get
$$
I_{0,0}=T\,S_1(\llambda,\theta_\h)\,\|\chi\|_{L^2}^2\, (1 + \cO(1/T)).
$$
Let us now consider the parts of $\|\psi^{(N)}_t\|^2$ involving the corrective terms $c_{m}(t,\h) \cD_{tq^1}\varphi_{2m}$. 
Such a corrective term may be coupled to the main term $\cD_{t'q^1}\varphi_{0}$, or to another corrective term 
$c_{m'}(t',\h) \cD_{t'q^1}\varphi_{2m'}$, $0\leq m'\leq 2l$. In both cases, \eqref{e:mm'vs00} shows that the involved scalar product $\la\varphi_{2m'},\cD_{(t-t')q^1}\varphi_{2m}\ra$ is bounded above by $C_{m'm}\la\varphi_{0},\cD_{(t-t')q^1}\varphi_{0}\ra$. 
On the domain of integration, all the polynomials $c_{m}(t,\h)$ are uniformly bounded above by $\cO(\h T^l)$. These terms are thus bounded above by $\tilde{C}_{m,m'}\,\h\, T^l\,I_{0,0}(\theta_\h=0)$, and are thus much smaller than $I_{0,0}$.

There remains to treat the contribution to $\|\psi^{(N)}_t\|^2$ of the remainder $R_l$. Since all $\varphi_m$ are normalized, a simple computation shows that the parts of $\|\psi^{(N)}_t\|^2$ involving $R_l$ are globally bounded above by $C\,\h^{l+1}T^{l+3}$.

Summing up all contributions, we obtain the stated estimate for the norm of $\Phi_{\chi_T,E_\h}$, and hence the norm of $\Psi_{\chi_T,E_\h}$.
\end{proof}
Estimating the norm of $\Psi_{\chi_T,E_\h}$ was a lengthy, yet necessary step. Using the fact that the norm is not 
very small, it allows us to prove that $\Psi_{\chi_T,E_\h}$ is microlocalized near $\gamma_0$.
Let us define the normalized quasimode
$$
\tPsi_{\chi_T,E_\h}\defeq \frac{\Psi_{\chi_T,E_\h}}{\| \Psi_{\chi_T,E_\h} \|}\,.
$$
%%%%%%%%%%%%%%%%
\begin{cor}\label{c:localization2}
If we choose the time $T = T_{\vareps'}$, the normalized state $\tPsi_{\chi_T,E_\h}$ is localized in the $\h^{\vareps'/3}$ neighbourood of $\gamma_0$, in the sense of Prop.~\ref{p:localization1}: for any $\Theta\in C^\infty_c(T^*(\IT\times \IR))$ with $\Theta\equiv 1$ in a fixed neighbourhood of $\gamma_0$, we have the estimate
\bequ
\|[\Oph(\Theta_{\h^{\vareps'/3}})-I] \tPsi_{\chi_T,E_\h} \|_{L^2} = \cO(\h^\infty)\,.
\eequ
\end{cor}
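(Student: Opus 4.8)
The plan is to commute the cutoff operator inside the time integral defining $\Psi_{\chi_T,E_\h}$, insert the pointwise-in-time localization estimate of Prop.~\ref{p:localization1}, and then divide by the $L^2$ norm, which Lemma~\ref{l:norm-qmode} guarantees is not small. First I would note that, for $\alpha\geq \h^{\vareps'/3}$, the symbol $\Theta_{\alpha}$ belongs to the exotic class $S^0_{\vareps'/3}(T^*(\IT\times\IR))$ introduced in Section~\ref{hprelim}, so by the Calder\'on--Vaillancourt theorem in that calculus (valid since $\vareps'/3<1/2$) the operator $\Oph(\Theta_{\h^{\vareps'/3}})-I$ is bounded on $L^2$, with a bound uniform in $\h\in(0,1]$. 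This lets me move it under the integral sign:
\[
[\Oph(\Theta_{\h^{\vareps'/3}})-I]\,\Psi_{\chi_T,E_\h}=\int_\IR \chi_T(t)\,e^{itE_\h/\h}\,[\Oph(\Theta_{\h^{\vareps'/3}})-I]\,\psi^{(N)}_t\,dt\,.
\]

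Next, for the choice $T=T_{\vareps'}$ the integrand is supported in $|t|\leq T_{\vareps'}\leq C|\log\h|$, and on this interval Prop.~\ref{p:localization1} gives, for every $M>0$, $\|[\Oph(\Theta_{\h^{\vareps'/3}})-I]\psi^{(N)}_t\|_{L^2}\leq C_M\,\h^M$ uniformly in $t$. Since $|\chi_T|\leq 1$, the integral form of the triangle inequality yields
\[
\|[\Oph(\Theta_{\h^{\vareps'/3}})-I]\,\Psi_{\chi_T,E_\h}\|_{L^2}\leq 2\,T_{\vareps'}\,C_M\,\h^M\leq C\,|\log\h|\,\h^M\,,
\]
which, as $M$ is arbitrary, is $\cO(\h^\infty)$.

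Finally I would divide by $\|\Psi_{\chi_T,E_\h}\|$. By Lemma~\ref{l:norm-qmode}, with $T=T_{\vareps'}$, $\theta_\h=(E_\h-q^0)/\h=\cO(1)$ and $q^1=\llambda+\cO(\h)$, the factor $S_1(\llambda,\theta_\h)$ is bounded below by a positive constant, so $\|\Psi_{\chi_T,E_\h}\|^2\geq c\,T_{\vareps'}\,\|\chi\|_{L^2}^2\geq c'|\log\h|$ for $\h$ small; in particular $\|\Psi_{\chi_T,E_\h}\|\geq 1$ for $\h$ small enough. Dividing the previous display by $\|\Psi_{\chi_T,E_\h}\|$ therefore preserves the $\cO(\h^\infty)$ bound, and since $\tPsi_{\chi_T,E_\h}=\Psi_{\chi_T,E_\h}/\|\Psi_{\chi_T,E_\h}\|$ this is exactly the assertion.

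There is essentially no serious obstacle: the only step requiring a little care is the uniform-in-$\h$ $L^2$-boundedness of $\Oph(\Theta_{\h^{\vareps'/3}})$ in the exotic calculus $S^0_{\vareps'/3}$, which is precisely why that symbol class was set up in Section~\ref{hprelim}. Everything else amounts to propagating the pointwise estimate of Prop.~\ref{p:localization1} across a logarithmically long time window and invoking the lower bound on the norm from Lemma~\ref{l:norm-qmode}.
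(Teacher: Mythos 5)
Your proof is correct and takes essentially the same route as the paper: apply Prop.~\ref{p:localization1} pointwise in $t$ over the window $|t|\le T_{\vareps'}$, use the triangle inequality under the integral to get an $\cO(|\log\h|\,\h^M)=\cO(\h^\infty)$ bound for the unnormalized state, and then divide by $\|\Psi_{\chi_T,E_\h}\|\sim|\log\h|^{1/2}$ from Lemma~\ref{l:norm-qmode}. Your added remark on the $L^2$-boundedness of $\Oph(\Theta_{\h^{\vareps'/3}})$ via Calder\'on--Vaillancourt in $S^0_{\vareps'/3}$ just makes explicit the step the paper leaves implicit when commuting the operator with the time integral.
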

%%%%%%%%%%%%%%%%
\begin{proof}
The proof is a direct consequence of Prop.~\ref{p:localization1}. Since $\Psi_{\chi_T,E_\h}$ is a linear combination of the evolved states $\psi_t^{(N)}$, we have by the triangular inequality, for any power $M>0$:
$$
\|[\Oph(\Theta_{\h^{\vareps'/3}})-I] \Psi_{\chi_T,E_\h}\|_{L^2}\leq T_{\vareps'}\|\chi\|_{L^\infty} \cO(\h^{M})\,.
$$
Since the norm of $\Psi_{\chi_T,E_\h}$ is of order $|\log\h|^{1/2}$, we may divide by it on both sides, keeping a remainder of the same form.
\end{proof}
We have shown in Lemma~\ref{l:E-local1} that the states composing $\Psi_{\chi_T,E_\h}$ are quasimodes for the operator $P^{(N)}(\h)$ which are centered at the energy $q^0(\h)=E_0+\cO(\h)$. 
We will take for central energy of our quasimode $\tPsi_{\chi_T,E_\h}$ the value $E_\h$ used in its definition, which may differ from $q^0$ by $\cO(\h)$. To compute the width of this quasimode we need to estimate the 
norm of $(P^{(N)}(\h)-E_\h)\Psi_{\chi_T,E_\h}$.
%%%%%%%%%%%%%%%%%%
\begin{prop}\label{p:width-model}
For $T=T_{\vareps'}$, we have the norm estimate
$$
\|(P^{(N)}-E_\h)\Psi_{\chi_T,E_\h}\|^{2} = \frac{\h^2}{T_{\vareps'}}\,S_1(\llambda,(E_\h-q^0)/\h)\,\|\chi'\|_{L^2}^2\, (1 + \cO(1/|\log\h|))\,,
$$
where the function $S^1$ is defined in \eqref{e:S_1}.
\end{prop}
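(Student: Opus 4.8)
The plan is to derive this estimate directly from Lemma~\ref{l:norm-qmode} by a single integration by parts in the time variable. Since $\psi^{(N)}_t = e^{-itP^{(N)}/\h}\psi_0$ solves $i\h\,\partial_t\psi^{(N)}_t = P^{(N)}\psi^{(N)}_t$, a one-line computation gives
$$
(P^{(N)}-E_\h)\big[e^{itE_\h/\h}\psi^{(N)}_t\big] = i\h\,\partial_t\big[e^{itE_\h/\h}\psi^{(N)}_t\big]\,.
$$
As $P^{(N)}$ is a fixed ($t$-independent) operator, I can bring it inside the integral \eqref{eq:def-qmode} defining $\Psi_{\chi_T,E_\h}$ and then integrate by parts; the cutoff $\chi_T$ is compactly supported, so there is no boundary term, and I obtain
$$
(P^{(N)}-E_\h)\Psi_{\chi_T,E_\h} = -i\h\int_\IR \chi_T'(t)\,e^{itE_\h/\h}\,\psi^{(N)}_t\,dt = -\frac{i\h}{T}\int_\IR \chi'(t/T)\,e^{itE_\h/\h}\,\psi^{(N)}_t\,dt = -\frac{i\h}{T}\,\Psi_{\tilde\chi_T,E_\h}\,,
$$
where $\tilde\chi := \chi' \in C^\infty_c((-1,1))$ and $\Psi_{\tilde\chi_T,E_\h}$ is defined exactly as in \eqref{eq:def-qmode}, but with the weight $\chi$ replaced by $\tilde\chi$. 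Taking norms, $\|(P^{(N)}-E_\h)\Psi_{\chi_T,E_\h}\|^2 = \frac{\h^2}{T^2}\,\|\Psi_{\tilde\chi_T,E_\h}\|^2$.

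Next I observe that $\Psi_{\tilde\chi_T,E_\h}$ is a time-averaged state of exactly the type treated in Lemma~\ref{l:norm-qmode}. The only formal point to check is that $\tilde\chi=\chi'$ need not be valued in $[0,1]$; but the proof of Lemma~\ref{l:norm-qmode} never uses positivity of the weight — it uses only its smoothness, its compact support in $(-1,1)$, and the finiteness of the $L^2$ and $L^\infty$ norms of the weight and of its first derivative (the latter entering only the subdominant terms $I^2_{0,0}$, $I^3_{0,0}$). Hence that lemma applies verbatim with $\chi$ replaced by $\chi'$ (and $\chi'$ replaced by $\chi''$ in the error terms). Since $T=T_{\vareps'}=\frac{(1-\vareps')|\log\h|}{2\llambda}$ lies in the admissible window $1\leq T\leq C|\log\h|$ once $\h$ is small, Lemma~\ref{l:norm-qmode} yields
$$
\|\Psi_{\tilde\chi_T,E_\h}\|^2 = T_{\vareps'}\,S_1\big(\llambda,(E_\h-q^0)/\h\big)\,\|\chi'\|_{L^2}^2\,\big(1+\cO(1/T_{\vareps'})\big)\,.
$$

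Combining the two displays and using that $T_{\vareps'}$ is of order $|\log\h|$ (so $\cO(1/T_{\vareps'})=\cO(1/|\log\h|)$), I get
$$
\|(P^{(N)}-E_\h)\Psi_{\chi_T,E_\h}\|^2 = \frac{\h^2}{T_{\vareps'}^2}\,\|\Psi_{\tilde\chi_T,E_\h}\|^2 = \frac{\h^2}{T_{\vareps'}}\,S_1\big(\llambda,(E_\h-q^0)/\h\big)\,\|\chi'\|_{L^2}^2\,\big(1+\cO(1/|\log\h|)\big)\,,
$$
which is the claimed identity. I do not expect any genuine analytic obstacle here: the heavy lifting — the overlap estimates \eqref{e:phi0Dphi0}--\eqref{e:mm'vs00} and the stationary-phase-type evaluation giving $S_1$ — was already carried out in Lemma~\ref{l:norm-qmode}. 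The only items that deserve an explicit word in the write-up are the vanishing of the boundary term in the integration by parts (immediate from $\chi\in C^\infty_c$), the insensitivity of the proof of Lemma~\ref{l:norm-qmode} to the sign of the averaging weight, and the fact that $\|\chi'\|_{L^2}\neq 0$ (true since $\chi$ is a non-constant bump function), so that the leading term genuinely dominates.
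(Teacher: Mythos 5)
Your proof is correct and is essentially the same as the paper's: a single integration by parts (with no boundary term, since $\chi$ has compact support) reduces the width to the norm of a time-averaged state with weight $\chi'$, to which Lemma~\ref{l:norm-qmode} is applied. Your explicit remark that the lemma's proof never uses positivity of the averaging weight (only its smoothness, compact support, and the finiteness of $\|\cdot\|_{L^2}$, $\|\cdot\|_{L^\infty}$ for the weight and its derivative) is a useful clarification that the paper leaves implicit when it says ``apply the Lemma, up to replacing $\chi$ by $\chi'$.''
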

%%%%%%%%%%%%%%%%%%
\begin{proof}
The proof is a straightforward adaptation of the proof of Lemma~\ref{l:norm-qmode}. From the definition (\ref{eq:def-qmode}) of $\Psi_{\chi_T,E_\h}$, we see that
\begin{align*}
(P^{(N)}-E_\h)\Psi_{\chi_T,E_\h}&= \int_{\IR}\chi_T(t)\,(P^{(N)}-E_\h)\,e^{-it(P^{(N)}-E_\h)/\h}\psi_0\,dt\\
&= \int_{\IR}\chi_T(t)\,e^{itE_\h/\h}\,i\h\partial_t(\,e^{-it(P^{(N)}-E_\h)/\h})\psi_0\,dt\\
&= -i\h \int_{\IR}(\partial_t\chi_T(t))\,e^{-it(P^{(N)}-E_\h)/\h}\psi_0\,dt\,.
\end{align*}
The integral in the RHS is similar to the one defining $\Psi_{\chi_T,E_\h}$. To compute its norm, we may just apply the Lemma~\ref{l:norm-qmode}, up to replacing $\chi$ by $\chi'$. We get:
$$
\|(P^{(N)}-E_\h)\Psi_{\chi_T,E_\h}\|^2 = \frac{\h^2}{T}\,S_1(\llambda,(E_\h-q^0)/\h)\,\|\chi'\|_{L^2}^2\, (1 + \cO(1/T))\,,
$$
which is the announced result after specializing to $T=T_{\vareps'}$.
\end{proof}
Putting together the results of Lemma~\ref{l:norm-qmode} and Prop.~\ref{p:width-model}, we find that the normalized quasimodes $\tPsi_{\chi_T,E_\h}$, centered at the energy $E_\h$, have the width 
$$
f(\h)=\frac{\h}{T_{\vareps'}}\frac{\|\chi'\|_{L^2}}{\|\chi\|_{L^2}}(1 + \cO(1/|\log\h|))\,,
$$
which is of the announced order $\cO(\h/|\log\h|)$. Let us pay attention to the factor. The time $T_{\vareps'}$ is the Ehrenfest time \eqref{e:Ehrenfest}. As in \cite{VS05}, we can optimize over the choice of cutoff $\chi$ to minimize the ratio $\frac{\|\chi'\|_{L^2}}{\|\chi\|_{L^2}}$.
\begin{lem}
\bequ\label{e:minimiz}
\inf_{\chi}\frac{\|\chi'\|_{L^2}}{\|\chi\|_{L^2}}=\pi/2\,,
\eequ
where we take the infimum over all $\chi\in C^\infty_c((-1,1),[0,1])$. Hence, 
for any $\vareps'>0$, we may find a cutoff $\chi_{\vareps'}\in C^\infty_c((-1,1),[0,1])$ such that 
$$
\frac{\|\chi_{\vareps'}'\|_{L^2}}{\|\chi_{\vareps'}\|_{L^2}}\leq \pi/2(1+\vareps')\,.
$$
\end{lem}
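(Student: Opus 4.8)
The plan is to identify the square of this infimum with the bottom of the Dirichlet spectrum of $-d^2/dx^2$ on the interval $(-1,1)$. That lowest eigenvalue equals $(\pi/2)^2$, attained by the positive eigenfunction $\cos(\pi x/2)$ (which satisfies $\int_{-1}^1\cos^2(\pi x/2)\,dx=1$ and $\int_{-1}^1 |(\cos(\pi x/2))'|^2\,dx=(\pi/2)^2$), so that the asserted value $\pi/2$ is simply $\sqrt{(\pi/2)^2}$. For the lower bound, given any $\chi\in C^\infty_c((-1,1))$ I would expand it on the orthonormal basis of $L^2((-1,1))$ formed by the normalized Dirichlet eigenfunctions $e_n(x)=\sin\!\big(\tfrac{n\pi}{2}(x+1)\big)$, $n\ge 1$, with $-e_n''=(\tfrac{n\pi}{2})^2 e_n$. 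Writing $\chi=\sum_{n\ge1}a_n e_n$ with convergence in $H^1_0$, one has $\|\chi'\|_{L^2}^2=\sum_{n\ge1}\big(\tfrac{n\pi}{2}\big)^2 a_n^2\ge \big(\tfrac{\pi}{2}\big)^2\sum_{n\ge1}a_n^2=\big(\tfrac{\pi}{2}\big)^2\|\chi\|_{L^2}^2$. Equality would force $\chi$ to be a multiple of $e_1=\cos(\pi x/2)$, which is not compactly supported inside $(-1,1)$; hence the infimum is not attained, but $\ge \pi/2$. (Note the range constraint $\chi\in[0,1]$ is immaterial for the lower bound, since it holds for all of $H^1_0$.)

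For sharpness I would exhibit an explicit minimizing sequence within the admissible class. Fix cutoffs $\psi_n\in C^\infty_c((-1,1),[0,1])$ with $\psi_n\equiv 1$ on $[-1+\tfrac1n,\,1-\tfrac1n]$, $\operatorname{supp}\psi_n'\subset[-1,-1+\tfrac1n]\cup[1-\tfrac1n,1]$, and $\|\psi_n'\|_{L^\infty}\le C n$, and set $\chi_n\defeq \psi_n\cdot\cos(\pi x/2)$. Then $\chi_n\in C^\infty_c((-1,1),[0,1])$, since both factors lie in $[0,1]$ on $[-1,1]$. Because $\cos(\pi x/2)$ vanishes to first order at $x=\pm1$, one has $|\cos(\pi x/2)|\le C/n$ on $\operatorname{supp}\psi_n'$, so $\|\psi_n'\cos(\pi x/2)\|_{L^2}^2\le (Cn)^2(C/n)^2(2/n)=C'/n\to 0$; meanwhile $\psi_n\cdot(\cos(\pi x/2))'\to(\cos(\pi x/2))'$ and $\chi_n\to\cos(\pi x/2)$ in $L^2$ by dominated convergence. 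Hence $\chi_n'=\psi_n'\cos(\pi x/2)+\psi_n(\cos(\pi x/2))'\to(\cos(\pi x/2))'$ in $L^2$, and the Rayleigh quotient of $\chi_n$ tends to $\|(\cos(\pi x/2))'\|_{L^2}^2/\|\cos(\pi x/2)\|_{L^2}^2=(\pi/2)^2$. Together with the lower bound this gives $\inf_\chi\|\chi'\|_{L^2}/\|\chi\|_{L^2}=\pi/2$. The final clause of the Lemma is then immediate: given $\vareps'>0$, pick $n$ so large that $\|\chi_n'\|_{L^2}/\|\chi_n\|_{L^2}\le \tfrac{\pi}{2}(1+\vareps')$ and take $\chi_{\vareps'}\defeq\chi_n$.

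The only delicate point is the endpoint analysis in the construction of the minimizing sequence. It works precisely because the candidate optimizer $\cos(\pi x/2)$ satisfies the Dirichlet condition, in fact vanishing linearly at $x=\pm1$: this makes the error term $\psi_n'\cos(\pi x/2)$ in the truncated derivative $o(1)$ in $L^2$ even though $|\psi_n'|\sim n$. For a trial function not vanishing at the boundary, the same truncation would contribute an $O(1)$ amount to the Dirichlet energy, and the value $\pi/2$ could not be reached within $C^\infty_c((-1,1))$; this is exactly why the infimum is not attained but is nonetheless equal to $\pi/2$.
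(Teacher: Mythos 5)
Your proof is correct and follows essentially the same route as the paper: both identify the infimum with the square root of the first Dirichlet eigenvalue of $-d^2/dt^2$ on $(-1,1)$, recognize $\cos(\pi t/2)$ as the ground state, and appeal to $C^\infty_c((-1,1))$ being dense in (indeed a core for) $H^1_0$ to approximate it within the admissible class. You simply carry out more explicitly the lower bound via eigenfunction expansion and the construction of the truncated minimizing sequence, both of which the paper leaves to the reader.
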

\begin{proof}
By relaxing the condition on the range of $\chi$, we may minimize the quadratic form 
$q(\chi)=\int(\chi'(t))^2\,dt$ over $\chi\in C^\infty_c((-1,1))$. This quadratic form can be used to define the Laplacian on the interval $(-1,1)$, with Dirichlet boundary conditions: this operator has the domain $H^1_0((-1,1))$, and admits $C^\infty_c((-1,1))$ as a core. Its ground state $\chi_0(t)=\cos(\pi t/2)\bbbone_{[-1,1]}$ reaches the announced infimum. By slightly perturbing $\chi_0$, we easily construct a smooth $\chi_{\vareps'}$ with the required property.
\end{proof}
Taking the cutoff $\chi_{\vareps'}$ in the formula for $\Psi_{\chi_T,E_\h}$, we get the width
\bequ\label{e:opt-width}
f(\h)=\pi\llambda (1+2\vareps')\frac{\h}{|\log\h|}\,(1 + \cO(1/|\log\h|))\,.
\eequ
Provided $\hbar$ is small enough, this width is bouded above by $C_\gamma\frac{\h}{|\log\h|}$ if we take $C_\gamma = \pi\lambda(1+3\vareps')$.

%%%%%%%%%%%%%%%%%%%%%%%%%%%%%%%%%%%%%%%%%
\subsection{A localized quasimode on $M$: proof of Proposition~\ref{pro:quasim0}}
To finish the proof of Proposition~\ref{pro:quasim0}, there remains to transform the quasimode $\tPsi_{\chi_T,E_\h}\in \cS_{\varphi/\h}(S^1 \times \R)$ for the QNF $P^{(N)}$ into a state $\psi_\h\in L^2(M)$, and check that the latter is a good quasimode for the operator $P(\h)$. This will be accomplished through the use of the Fourier integral operators $U_N$ bringing the original Hamiltonian $P(\h)$ to the QNF $P^{(N)}$, as described in Prop.~\ref{prop:qbnf}.
Using the notation of this Proposition, we consider the normalized quasimode $\tPsi_{\chi_T,E_\h}$ with the  optimized parameters $T_{\vareps'},\chi_{\vareps'}$ as described in the last subsection, and define the following states on $L^2(M)$, which will be quasimodes for the original operator $P(\h)$:
$$
\psi_\h \defeq U_N \,\tPsi_{\chi_T,E_\h}\,.
$$
Here we begin to manipulate our parameters. From Corollary~\ref{c:localization2} we know that $\tPsi_{\chi_T,E_\h}$ is, up to a remainder $\cO(\h^\infty)$, localized in the $\h^{\vareps'}/2$ neighbourood of $\gamma_0$. In this region, the FIO $U_N$ is essentially unitary, so that 
$$
\| \psi_\h \|_{L^2(M)} = 1 + \cO(\h^\infty)\,.
$$
Besides, conjugating the localization estimate of the Corollary, we
see that $\psi_\h$ is 
microlocalized in the $\h^{\vareps'}/2$ neighbourood of $\gamma(E_0)$, up to a remainder $\cO(\h^\infty)$.

Now, we want to study the energy width of the quasimode $\psi_\h$ with respect to the operator $P(\h)$. From the conjugation \eqref{e:QNF-conjugation}, we get
$$
U_N^{*}\, \,(P-E_\h)\, \psi_\h = (P^{(N)}-E_\h)\tPsi_{\chi_T,E_\h} + R_{N+1}\tPsi_{\chi_T,E_\h}\,.
$$
The microlocalization of $\psi_\h$ implies the similar localization of $(P-E_\h)\, \psi_\h$, so that 
$$
\|(P-E_\h)\, \psi_\h \|_{L^2(M)} = \|U_N^{*}\, \,(P-E_\h)\, \psi_\h  \|_{L^2(\IT\times\IR)} + \cO(\h^\infty)\,.
$$
(the FIO $U_N^*$ is unitary microlocally near $\gamma_0$, and subunitary away from it). 
We already know the norm of $(P^{(N)}-E_\h)\tPsi_{\chi_T,E_\h}$, given by the optimized width \eqref{e:opt-width}.
There remains to check that the norm of $R_{N+1}\tPsi_{\chi_T,E_\h}$ is of smaller order. 

We recall that the symbol of the operator $R_{N+1}$ has the form $r_{N+1}(s,\tau, x, \xi; h) = \cO\big((\h,x,\xi)^{N+1}\big)$ when $\h,x,\xi\to 0$. On the other hand, Prop.~\ref{c:localization2} shows that 
$\tPsi_{\chi_T,E_\h}$ is localized in a microscopic neighbourhood of $\gamma_0$, so we can write
$$
R_{N+1}\tPsi_{\chi_T,E_\h}=R_{N+1}\Oph(\Theta_{\h^{\vareps'/3}})\tPsi_{\chi_T,E_\h} + \cO(\h^\infty)\,.
$$
The symbol calculus in the class $S^{-\infty}_{\vareps'/3}(T^*(\IT\times\IR))$ shows that 
$$
R_{N+1}\Oph(\Theta_{\h^{\vareps'/3}})=\Oph(r_{N+1}\#\Theta_{\h^{\vareps'/3}})
$$
where $\#$ denotes the exact Moyal product on $T^*(\IT\times\IR)$.
The symbol $r_{N+1}\#\Theta_{\h^{\vareps'/3}}$ is of order $\cO(\h^{\vareps'(N+1)/3})$ in the class $S^{-\infty}_{\vareps'/3}(T^*(\IT\times\IR))$. The Calder\'on-Vaillancourt in this class then implies that
$$
\| R_{N+1}\Oph(\Theta_{\h^{\vareps'/3}}) \|_{L^2\to L^2}=\cO(\h^{\vareps'(N+1)/3})\,.
$$
As a consequence, we obtain
$$
\|(P-E_\h)\, \psi_\h \|_{L^2(M)} = \|(P^{(N)}-E_\h)\tPsi_{\chi_T,E_\h} \|_{L^2(\IT\times \IR)} + \cO(\h^{\vareps'(N+1)/3})\,.
$$
The first term on the RHS has been estimated in \eqref{e:opt-width}. To ensure that the remainder is of smaller order, we need $N,l$ and $\vareps'$ to satisfy the condition 
\bequ\label{e:Neps'}
(N+1)\vareps'/3>1\,,\qquad\text{and}\quad l\geq 2\,.
\eequ
\begin{rem}The choice of these parameters should proceed as follows. First we select $\vareps'>0$ small, which determines the choice of the time $T_{\vareps'}$ and the near-optimal cutoff $\chi_{\vareps'}$ used to construct our model quasimode. Then, we select $N$, the order of the normal form, large enough to satisfy the condition \eqref{e:Neps'}: this ensures that the remainder operator $R_{N+1}$ in the QNF, acting on our quasimode $\tPsi_{\chi_T,E_\h}$, gives a negligible contribution. 
The auxiliary index $l$ only determines the precision of our expansion for the evolved coherent states, but does not impact the definition of the quasimode.
\end{rem}
To end the proof of Proposition~\ref{pro:quasim0} there remains to check that 
$\psi_\h$ is microlocalized on $\gamma(E_0)$. 
Using the FIO $U_N$, we may transport the cutoffs $\Oph(\Theta_{\h^{\vareps'/3}})$ appearing in Prop.~\ref{p:localization1} and Corollary~\ref{c:localization2}, onto the operator
$$
 U_N\,\Oph(\Theta_{\h^{\vareps'/3}})\,U_N^* = \Oph(\tTheta) + \cO(\h^\infty)\,.
$$
In each coordinate chart, the symbol $\tTheta$ is well-defined up to a remainder in $\h^\infty S^{-\infty}_{\vareps'/3}(M)$. This symbol is essentially supported in the $\h^{\vareps'/3}$-neighbourhood of $\gamma(E_0)$, equal to unity in a slightly smaller neighbourhood. The operator $\Oph(\tTheta)$ is, up to a negligible error, selfadjoint (because $U_N$ are microlocally unitary near $\gamma$). 
It satisfies
\bequ\label{e:localization3}
\|(\Oph(\tTheta)-I)\psi_\h \| = \cO(\h^\infty)\,,
\eequ
showing that $\psi_\h$ is microlocalized on $\gamma(E_0)$. 

The above microlocalization implies that any semiclassical measure $\mu_{sc}$ associated with the sequence $(\psi_\h)$ must be a probability measure supported on $\gamma(E_0)$. 
Because the width $f(\h)=o(\h)$, this measure must be invariant with respect to the Hamiltonian flow $\Phi^t_p$. As a result, we must have $\mu_{sc}=\delta_{\gamma(E_0)}$ which the unique invariant measure supported on the orbit $\gamma(E_0)$. This achieves the proof of Prop.~\ref{pro:quasim0}

$\hfill\square$

\subsection{Comparison with another quasimode construction}\label{s:CdVP}
In \cite{CdVP94} Colin de Verdi\`ere and Parisse describe $\cO(\h^\infty)$ quasimodes and eigenstates of a 1D Hamiltonian with a hyperbolic fixed point $(x,\xi)=(0,0)$. Near the fixed point the Hamiltonian can be put in the QNF $Q^{(N)}(\h)$. Their strategy to construct the quasimodes is different from the one we presented here. Their starting Ansatz are the distributions on $\cS'(\IR)$:
$$
\tvarphi_\theta(x)\defeq |x|^{-1/2+i\theta}\,,\quad\text{for any $\theta\in\IR$}\,.
$$
A direct computation shows that $\tvarphi_\theta$ is an eigendistribution of the quadratic operator $Q^{(N)}_q$:
$$
Q^{(N)}_q \tvarphi_\beta = \h q^1 \theta\, \tvarphi_\beta\,.
$$
From the decomposition \eqref{e:q^(N)} and Lemma~\ref{l:x-xi}, $\tvarphi_\theta$ is also an eigendistribution of the full operator $Q^{(N)}$, with eigenvalue $\h q^1 \theta+ \cO(\h^2)$. This distribution appears to be a good starting point for a quasimode construction. In \cite{CdVP94} the authors use the fact that the fixed point is homoclinic: outside a neighbourhood of the fixed point, the unstable manifold ($x$-axis) bends and becomes the stable manifold ($\xi$-axis). This way, they are able to connect together the stable and unstable Lagrangian branches of $\tvarphi_\beta$ using WKB theory, and produce $\cO(\h^\infty)$ quasimodes of $P(\h)$. 

In our present setting of a ``generic'' fixed point, we have no information on the continuation of the stable and unstable manifolds. The best we can do to the distribution $\tvarphi_\theta$ is to microlocalize it inside a compact neighbourhood of $(0,0)\in T^*\IR$. For this aim we may use a cutoff $\Theta\in C^\infty_c([-2,2]^2)$, $\Theta\equiv 1$ in $[-1,1]^2$. The state 
$$
\Phi_{\Theta,\theta}\defeq \Oph^w(\Theta)\varphi_\theta
$$ 
is now in $L^2(\IR)$, with square norm $\| \Phi_{\Theta,\theta} \|^2= 2|\log\h|+\cO(1)$, each of the four branches of the stable and unstable manifolds carrying one fourth of this weight \cite{NV97}. 
One can show that the normalized state $\tPhi_{\Theta,\theta}=\frac{\Oph^w(\theta)\varphi_\theta}{\|\Oph^w(\theta)\varphi_\theta\|}$ converges to the semiclassical measure $\delta_{(0,0)}$. This state is a quasimode of $Q^{(N)}$ of central energy $\h q^1 \theta$. Let us compute its width:
\begin{align*}
(Q^{(N)}-\h q^1 \theta)\Phi_{\Theta,\theta} &= [Q^{(N)},\Oph^w(\Theta)] \varphi_\theta + \Oph^w(\Theta)\,(Q^{(N)}-\h q^1 \theta)\varphi_\theta\\
&= - i\h \Oph\big(\{q^{(N)},\Theta\}+\cO(\h)\big)\,\varphi_\theta\,.
\end{align*}
Because $\Theta(x,\xi)=1$ near the origin, the symbol
$\{q^{(N)},\Theta\}$ vanishes near the origin and is supported in an annulus. As a result, the state
$[Q^{(N)},\Oph^w(\Theta)]\,\varphi_\theta$ is microlocalized along the
four branches, away from the origin, and has opertor norm $\cO(\h)$. Finally, we find that $\tPhi_\Theta$ is a quasimode of width $\cO(\frac{\h}{|\log\h |^{1/2}})$. This is less sharp than the width $\cO(\h/|\log \h|)$ we have obtained by time averaging. 

We now proceed towards understanding the connection between these two quasimode constructions. If we replace in the integral \eqref{e:Phi_chi} the smooth time cutoff $\chi$ by a sharp cutoff $\bbbone_{[-1,1]}$, we obtain a quasimode with an energy width $\cO(\frac{\h}{|\log\h |^{1/2}})$, which resembles the state $\tPhi_{\Theta,\theta}$ with the identification $\h q^1 \theta = E_\h - q^0$. 
On the opposite, if we try to represent the state $\Phi_{\chi_T,E_\h}$ in the form of a truncation $\Oph^w(\Theta)\varphi_\theta$, we obtain a cutoff function of the form
$\Theta(x,\xi) \approx \chi\Big( \frac{\log((x^2+\xi^2)/\h)}{(1-\vareps')|log \h|}\Big)$ outside the disk $\{(x^2+\xi^2)\leq \h\}$. The corresponding symbol $\Theta$ is quite singular when $(x^2+\xi^2)\gtrsim \h$ as it belongs to the ``critical'' symbol class $S^{-\infty}_{1/2}(T^*\IR)$. This singular symbol would be the price to pay if one wants to recover the small energy width $\cO(\h/|\log \h|)$ using this construction.

%%%%%%%%%%%%%%%%%%%%%%%%%%%%%%%%%%%%%%%%%%%%%%%%%%%%%%%%%%
\section{Partially localized quasimodes and small logarithmic widths}
%%%%%%%%%%%%%%%%%%%%%%%%%%%%%%%%%%%%%%%%%%%%%%%%%%%%%%%%%%

In this section we will prove our main Theorem~\ref{thm:mainthm}.

We will use the assumption of compactness for $M$, which ensures that the spectrum of $P(\h)$ is purely discrete. 
In Prop.~\ref{pro:quasim0} we have constructed, for any given sequence $(E_\h=E_0+\cO(\h))_{\h\to 0}$, a family $(\psi_\h)_{\h\to 0}$ of quasimodes centered at $E_\h$ and with corresponding widths $C_\gamma\frac{\h}{|\log\h|}$, for a constant $C_\gamma=\pi\llambda(1+3\vareps')$.
In this section we will make the choice $E_\h = E_0$ for all $\h$.

For any eigenvalue $E_j=E_j(\h) \in \Spec(P(h))$, let
$\Pi_{E_j} \psi_\h$
be the corresponding spectral projection of the quasimode $\psi_\h$ on the $E_j$-eigenspace.
For some $c_2>0$ to be chosen later, we consider the spectral interval $I = [E_0 - c_2 \frac{\h}{|\log \h|}, E_0 + c_2 \frac{\h}{|\log \h|}]$. The Pythagorean theorem ensures that
\bequ \label{quasicent}
\|(P(h) - E_0)\psi_\h\|^2 = \sum_{E_j \in I} |E_j - E_0|^2 \|\Pi_{E_j} \psi_h \|^2 +  \sum_{E_j \in I^{\complement}} |E_j - E_0|^2 \|\Pi_{E_j} \psi_h \|^2.
\eequ
Using the quasimode property and the form of the interval $I$, the second term on the RHS above satisfies the following bound, for $\h\in(0,\h_{C_\gamma})$:
\bequ
\Big(\frac{C_\gamma \h}{|\log \h|} \Big)^2 \geq \sum_{E_j \in I^{\complement}} |E_j - E_0|^2 \|\Pi_{E_j} \psi_h \|^2 \geq \Big(\frac{c_2 \h}{|\log \h|} \Big)^2 \|\Pi_{I^{\complement}} \psi_h \|^2\,.
\eequ
As a result, we get
\bequ
\|\Pi_{I^{\complement}} \psi_h \|^2 \leq \Big( \frac{C_\gamma}{c_2} \Big)^2\,.
\eequ 
To ensure that this upper bound is nontrivial we choose $c_2 > C_\gamma$. As a consequence, the projection of $\psi_\h$ inside $I$ satisfies
\bequ\label{e:lower-b}
\|\Pi_{I} \psi_h\|^2\geq 1-\Big( \frac{C_\gamma}{c_2} \Big)^2>0\,,
\eequ
in particular the interval $I$ contains at least one eigenvalue $E_j$ of $P(\h)$.

In order to create quasimodes of smaller width than $\psi_\h$, we will project $\psi_\h$ on ``short'' spectral intervals. Precisely, for any choice of $\eps>0$, we take $K>0$ large enough such that 
\bequ
\frac{c_2}{K} \leq \epsilon\,,\qquad \text{for instance by taking }K=[c_2/\eps]+1\,,
\eequ 
and partition the interval $I$ into $K$ disjoint subintervals $(I_k)_{k=1,\ldots,K}$ of widths $\frac{2c_2}{K}$ and centered at the energies $e_k = E_0+\frac{c_2\h}{|\log\h|}(-1 + \frac{2k-1}{K})$.
By Pythagoras we have 
$$
\|\Pi_{I} \psi_h\|^2 = \sum_{k=1}^K \|\Pi_{I_k} \psi_h\|^2\,.
$$
For each $\h$, let $k(\h)$ be the index (or one of the indices) for which the norm $\|\Pi_{I_k} \psi_h\|$ is maximal. From the lower bound \eqref{e:lower-b} this means that
\bequ
\|\Pi_{I_{k(\h)}} \psi_h \|^2 \geq \frac{1}{K} \Big(1 - \Big( \frac{C_\gamma}{c_2} \Big)^2\Big) \,.
\eequ
The normalized state
\bequ
\tpsi_\h = \frac{\Pi_{I_{k_0(\h)}} \psi_\h}{\|\Pi_{I_{k_0(\h)}} \psi_\h\|}
\eequ
is automatically a quasimode of $P(\h)$, centered at $E_\h=e_{k(\h)}$ and of width $\eps \frac{\h}{|\log \h|}$. 

Let us now study the localization properties of the states $\tpsi_\h$. 
The orthogonality of the eigenfunctions of $P(h)$ implies the identity
$\|\Pi_{I_{k_0(\h)}} \psi_\h \|^2 = \la \tpsi_\h, \psi_\h  \ra^2$. 
Let us insert the microlocal cutoff $\Oph(\tTheta)$ used in 
\eqref{e:localization3}:
\begin{align*}
\la \tpsi_\h, \psi_\h  \ra^2 &= \la \tpsi_\h, \Oph(\tTheta)\psi_\h  \ra^2 +\cO(\h^{l})\\
&= \la \Oph(\tTheta)^*\tpsi_\h, \psi_\h  \ra^2 +\cO(\h^{l})\\
&\leq \| \Oph(\tTheta)\tpsi_\h\|^2 +\cO(\h^{l})\,,
\end{align*}
where we used the fact that $\Oph(\tTheta)^*=\Oph(\tTheta)+\cO(\h^\infty)$.
From these inequalities we deduce the lower bound
\bequ
\| \Oph(\tTheta)\tpsi_\h\|^2 \geq \frac{1}{K} \Big(1 - \big( \frac{C_\gamma}{c_2} \big)^2\Big) - \cO(\h^{l})\,.
\eequ
This estimate shows that any semiclassical measure $\mu_{sc}$ associated with the sequence of quasimodes 
$(\tpsi_\h)$ contains a singular component $w_\gamma \delta_{\gamma(E_0)}$, with a weight 
\bequ\label{e:weight1}
w_\gamma \geq \frac{1}{K} \big(1 - \big( \frac{C_\gamma}{c_2} \big)^2\big)\,.
\eequ
In other words, the quasimodes $(\tpsi_\h)$ exhibit a ``strong scar'' on the orbit $\gamma(E_0)$. 

Finally, we may optimize the parameters in the following way: for a given (small) width $\eps>0$, we want to keep the weight $w_\gamma$ as large as possible. 
By playing a bit with the parameters $c_2$ and $K$, we end up with the following estimates.
\begin{lem}
For any $0<\eps < C_\gamma$, there is a choice of $K$ and $c_2$ maximizing the RHS of \eqref{e:weight1}. 

When $\eps\ll C_\gamma$, the optimal parameters satisfy the estimates 
$c_2= \sqrt{3} C_\gamma + \cO(\eps)$, $K= K=[c_2/\eps]+1$, 
leading to the following lower bound on the weight:
\bequ
w_\gamma\geq \frac{\eps}{C_\gamma}\frac{2}{3\sqrt{3}} + \cO((\eps/C_\gamma)^2)\,.
\eequ
\end{lem}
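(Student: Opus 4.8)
The plan is to treat this final Lemma as a purely finite-dimensional optimization of the weight bound \eqref{e:weight1}, requiring no new analytic input. First I would note that, with $\eps$ and $C_\gamma$ fixed, the integer $K$ is determined by $c_2$ via $K=[c_2/\eps]+1$ (or, harmlessly and slightly more efficiently, $K=\lceil c_2/\eps\rceil$, the smallest admissible value, which coincides with $[c_2/\eps]+1$ off a discrete set of $c_2$), so the right-hand side of \eqref{e:weight1} is a single function $R(c_2)$ on $(C_\gamma,\infty)$. This $R$ tends to $0$ as $c_2\to C_\gamma^+$ (since $1-(C_\gamma/c_2)^2\to0$) and as $c_2\to\infty$ (since $1/K\le\eps/c_2\to0$), and on each interval on which $[c_2/\eps]$ is constant it is strictly increasing; hence $\sup R>0$ is attained, and only at one of the finitely many candidate points $c_2=m\eps$ with $m\in\IN$, $m\eps>C_\gamma$. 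This yields the first assertion, the existence of an optimal pair $(c_2,K)$.

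Next, to locate this optimum in the regime $\eps\ll C_\gamma$, I would sandwich the quantity $\frac1K\big(1-(C_\gamma/c_2)^2\big)$ between $\frac{\eps}{c_2+\eps}\big(1-(C_\gamma/c_2)^2\big)$ and $\frac{\eps}{c_2}\big(1-(C_\gamma/c_2)^2\big)$, using $c_2/\eps\le K\le c_2/\eps+1$. Writing $u=c_2/C_\gamma>1$, both outer expressions equal $\frac{\eps}{C_\gamma}\,g(u)\,\big(1+\cO(\eps/C_\gamma)\big)$ with $g(u)=u^{-1}-u^{-3}$. A one-line calculus check gives $g'(u)=u^{-4}(3-u^2)$, so $g$ has a unique, nondegenerate maximum on $(1,\infty)$, located at $u=\sqrt3$ with value $g(\sqrt3)=\frac{2}{3\sqrt3}$. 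Since the admissible values $c_2=m\eps$ form an $\eps$-net and $g$ is nondegenerate at its maximum, the optimal $c_2$ must satisfy $c_2=\sqrt3\,C_\gamma+\cO(\eps)$, with $K=[c_2/\eps]+1$ the associated integer; this is the claimed description of the optimal parameters.

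Finally I would substitute $c_2=\sqrt3\,C_\gamma+\cO(\eps)$ into the lower member of the sandwich and expand: $1-(C_\gamma/c_2)^2=\frac23+\cO(\eps/C_\gamma)$ and $\frac{\eps}{c_2+\eps}=\frac{\eps}{\sqrt3\,C_\gamma}\big(1+\cO(\eps/C_\gamma)\big)$, whose product is $\frac{2}{3\sqrt3}\,\frac{\eps}{C_\gamma}+\cO\big((\eps/C_\gamma)^2\big)$. This yields the stated bound $w_\gamma\ge\frac{2}{3\sqrt3}\,\frac{\eps}{C_\gamma}+\cO\big((\eps/C_\gamma)^2\big)$ and completes the argument.

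I do not anticipate a genuine obstacle; the only point needing care is the bookkeeping of the integer constraint $K=[c_2/\eps]+1$. One must verify that this discretization moves the optimizer only by $\cO(\eps)$ (the nearest integer $m$ to $\sqrt3\,C_\gamma/\eps$ is $\sqrt3\,C_\gamma/\eps+\cO(1)$, so $c_2=m\eps=\sqrt3\,C_\gamma+\cO(\eps)$) and perturbs the optimal weight only at order $\cO((\eps/C_\gamma)^2)$ (using that $g$ is nondegenerate at its maximum and $1/K=1/m^\star+\cO((\eps/C_\gamma)^2)$), and that a true maximizer — not merely a supremum — exists; passing to $K=\lceil c_2/\eps\rceil$ makes the last point transparent, since then $R$ attains its maximum at some $c_2=m\eps$.
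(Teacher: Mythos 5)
The paper gives no proof of this Lemma — it is preceded only by the phrase ``By playing a bit with the parameters $c_2$ and $K$'' — so you are supplying the details. Your argument is correct and is clearly the elementary calculus optimization the authors have in mind: reduce to minimizing over $K=\lceil c_2/\eps\rceil$ at fixed $c_2$, write $u=c_2/C_\gamma$, maximize $g(u)=u^{-1}-u^{-3}$ at $u=\sqrt3$ with value $2/(3\sqrt3)$, and check the integer constraint only moves things by $\cO(\eps)$; the care you take with the discretization (noting $[c_2/\eps]+1$ versus $\lceil c_2/\eps\rceil$ and that a genuine maximizer exists) is a useful refinement of what the paper leaves unsaid.
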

Since we can take $C_\gamma$ arbitrary close to $\pi\lambda_\gamma$, we get the estimate \eqref{e:weight} of our Theorem~\ref{thm:mainthm}. $\hfill\square$

%%%%%%%%%%%%%%%%%%%%%%%%%%%

\end{document}